\documentclass[12pt]{article}

\usepackage{amsthm,amsmath,amssymb,color}
\usepackage[noadjust,sort]{cite}

\setlength{\textwidth}{6.3in}
\setlength{\textheight}{8.7in}
\setlength{\topmargin}{0pt}
\setlength{\headsep}{0pt}
\setlength{\headheight}{0pt}
\setlength{\oddsidemargin}{0pt}
\setlength{\evensidemargin}{0pt}
\setlength{\parskip}{0.5ex}
\normalsize

\newtheorem{thm}{Theorem}
\newtheorem{cor}[thm]{Corollary}
\newtheorem{lemma}[thm]{Lemma}

\renewcommand{\dfrac}[2]{\lower0.15ex\hbox{\large$\frac{#1}{#2}$}}
\renewcommand{\(}{\bigl(}
\renewcommand{\)}{\bigr)}

\newcommand{\dig}{\mathaccent"017E} 
\newcommand{\B}{{\mathcal B}} 
\newcommand{\digB}{\dig{\mathcal B}}
\newcommand{\D}{{\mathcal D}}
\newcommand{\G}{{\mathcal G}} 
\newcommand{\digG}{\dig{\mathcal G}}
\newcommand{\I}{{\mathcal I}} \newcommand{\digI}{\dig{\mathcal I}}
\newcommand{\V}{{\mathcal V}} \newcommand{\digV}{\dig{\mathcal V}}

\newcommand{\eps}{\varepsilon}

\newcommand{\boldital}[1]{{\textit{\textbf{#1}}}}
\newcommand{\svec}{\boldital{s}}
\newcommand{\tvec}{\boldital{t}}
\newcommand{\xvec}{\boldital{x}}
\newcommand{\Svec}{\boldital{S}}
\newcommand{\Tvec}{\boldital{T}}
\newcommand{\Xvec}{\boldital{X}}

\newcommand{\oo}{\tilde{o}}

\newcommand{\sprod}{\prod_{i=1}^m \binom{n}{s_i}}
\newcommand{\tprod}{\prod_{j=1}^n \binom{m}{t_j}}
\newcommand{\dsprod}{\prod_{i=1}^n \binom{n{-}1}{s_i}}
\newcommand{\dtprod}{\prod_{j=1}^n \binom{n{-}1}{t_j}}

\newcommand{\Var}{\operatorname{Var}}
\newcommand{\Prob}{\operatorname{Prob}}
\newcommand{\Reals}{{\mathbb{R}}}

\newcommand{\expect}{\operatorname{\mathbb E}}
\newcommand{\abs}[1]{\lvert#1\rvert} \let\card=\abs
\newcommand{\Abs}[1]{\bigl|#1\bigr|} 
\newcommand{\ssqrt}[1]{\sqrt{\smash{#1}\vrule width0pt height2.0ex depth0.3ex}}
\newcommand{\stevent}{\Svec=\svec\wedge\Tvec=\tvec}

\DeclareMathOperator{\Bin}{Bin}
\newcommand{\suchthat}{\mathrel{:}}
\newcommand{\nicebreak}{\vskip 0pt plus 50pt\penalty-300\vskip 0pt plus -50pt }

\begin{document}

\date{}
\title {Degree sequences of random digraphs\\ and bipartite graphs}

\author{
Brendan~D.~McKay\vrule width0pt height2ex\thanks
 {Research supported by the Australian Research Council.}
 ~and~ 
 Fiona Skerman\vrule width0pt height2ex\thanks
 {Current address: University of Oxford, Department of Statistics, Oxford OX1 3TG, UK}\\
\small Research School of Computer Science\\[-0.8ex]
\small Australian National University\\[-0.8ex]
\small Canberra ACT 0200, Australia\\[-0.3ex]
\small\texttt{bdm@cs.anu.edu.au, ~skerman@stats.ox.ac.uk}
}

\maketitle

\begin{abstract}
We investigate the joint distribution of the vertex degrees in three
models of random bipartite graphs.
Namely, we can choose each edge with a specified
probability, choose a specified number of edges, or specify the vertex degrees
in one of the two colour classes.

This problem can alternatively be described in terms of the
row and sum columns of random binary matrix or the in-degrees and
out-degrees of a random digraph, in which case we can optionally forbid loops.
It can also be cast as a problem in random hypergraphs, or as a classical
occupancy, allocation, or coupon collection problem.

In each case, provided the two colour classes are not too different
in size nor the number of edges too low, we define a
probability space based on independent binomial variables and show that
its probability masses asymptotically equal those of the degrees
in the graph model almost everywhere.
The accuracy is sufficient to asymptotically determine the expectation
of any joint function of the degrees whose maximum is
at most polynomially greater than its expectation.
\end{abstract}

{\small \noindent\textbf{Keywords.} bipartite graph, degree sequence, random graph,
contiguity, digraph, directed graph, allocation, occupancy, coupon collection.\\
\textbf{AMS classifications.} 05C80, 60C05, 60K30, 05C20, 05C07}

\section{Introduction}\label{s:intro}
We will study the joint distributions of the vertex degrees for three different models of random bipartite graphs. In each case, we construct simpler probability spaces which match these distributions to high precision. The new probability spaces are based on independent binomial distributions and allow asymptotic calculations of any random variable which is a function of the degrees and has maximum at most polynomially greater than its expectation. In Section 2.1 we will show an example of such a calculation. Note that our results are much stronger than contiguity or decreasing total variation distance. These results are similar to those obtained by McKay and Wormald~\cite{degseq,degseq2} for the case of ordinary (not necessarily
bipartite) graphs.

We prefer to use graph terminology, but will also describe the
problem in the matrix and other settings.  Consider a probability
space of $m\times n$ matrices over $\{0,1\}$.  Three probability spaces will
be considered.  In the first case, which we call $\G_p$, some number $p\in(0,1)$ is specified
and each entry of the matrix is independently equal to 1 with
probability $p$ and equal to 0 otherwise.  In the second case, which we call $\G_k$, some integer $k$ is specified, and all $m\times n$ binary matrices
with exactly $k$ ones have the same probability, and no other matrices are allowed. In the third case, which we call $\G_\tvec$,
a list of $n$ integers $t_1,\ldots,t_n$ is specified, and all
$m\times n$ binary matrices with column sums $t_1,\ldots,t_n$,
respectively, are equally likely and no others are allowed.

We can interpret the matrix as a bipartite graph in the standard fashion.
Associate distinct vertices
$U=\{u_1,\ldots,u_m\}$ with the rows, and $V=\{v_1,\ldots,v_n\}$
with the columns, and place an edge between $u_i$ and $v_j$
exactly when the matrix entry in position $(i,j)$ equals~1.
The row and column sums of the matrix correspond to
the degrees of the vertices.

These probability models have also appeared in other settings. Given $m$ bins, at 
each stage $j=1, \ldots, n$ throw $t_j$ balls into distinct bins with all $\binom{m}{t_j}$ 
possible placings equally likely. Then the distribution of the number of balls in each 
bin $\Svec=(S_1, \ldots, S_m)$ can be studied.
This model is referred to as 
\emph{allocation by complexes} and is precisely our $\G_\tvec$ model.
If we allow the number of balls thrown to be a random variable $T_j$, binomially 
distributed with parameters $(m, p)$, we attain the $\G_p$ model. 

Similarly, in the \emph{coupon collection} problem a customer repeatedly buys a 
random number, $T$, of distinct coupons from a set of $m$ possible different coupons. 
This covers both our $\G_p$ case when $T$ is binomially distributed with parameters 
$(m, p)$ and our $\G_\tvec$ case where $T_j = t_j$ with probability 1. 
(Here, our vector $\svec$ describes the number of each coupon collected 
and $\tvec$ the number of coupons collected at each stage.)  

Finally, consider a hypergraph on $m$ vertices. At each stage $j=1, \ldots, n$,
choose at random a hyperedge of size $t_j$, allowing multi-edges. Then if we 
set $S_i$ to be the number of hyperedges which contain the $i$th vertex, we 
obtain the $\G_\tvec$ model.

If $m=n$, we can also associate the matrix with a directed graph.
There are $n$ vertices $\{w_1,\ldots,w_n\}$. A matrix entry
equal to~1 in position $(i,j)$ corresponds to a directed edge
from $w_i$ to $w_j$.  The case $i=j$ is permitted, so these directed
graphs can have loops. The row and column sums of the matrix
correspond to the out-degrees and in-degrees, respectively, of
the directed graph.
We will also treat the case of loop-free digraphs, which correspond
to square matrices with zero diagonal.  Our methods would also
work if some other limited set of matrix entries are required
to be zero, but we have not applied them in that case.

We now continue using the bipartite graph formulation.
For each of the three probability spaces of random bipartite graphs,
we seek to examine the $(m{+}n)$-dimensional joint distribution of the
vertex degrees.
If $G$ is a bipartite graph on $U\cup V$ (respecting the partition into
$U$ and $V$), then $\svec=\svec(G)=(s_1,\ldots,s_m)$
is the list of degrees of $u_1,\ldots,u_m$, and
$\tvec=\tvec(G)=(t_1,\ldots,t_n)$ is the list of degrees of
$v_1,\ldots,v_n$.
We call the pair $(\svec,\tvec)$ the \textit{degree sequence} of~$G$.

Define $I_n=\{0,1,\ldots,n\}$ and $I_{m,n}=I_n^m\times I_m^n$.
Also let $G(\svec,\tvec)$ be the number of (labelled) bipartite graphs on
$U\cup V$ with degree sequence $(\svec,\tvec)$.
In the case of $m=n$, we also define $\dig G(\svec,\tvec)$ to be
the number of loop-free digraphs with in-degrees $\svec$ and
out-degrees~$\tvec$.

For precision we need to distinguish between random variables (written
in uppercase) and the values they may take (written in lowercase).
For each probability space of random graphs, as determined by the context,
$\Svec=(S_1,\ldots,S_m)$ will denote the random variable given by the
degrees in $U$ and $\Tvec=(T_1,\ldots,T_n)$  will denote the random
variable given by the the degrees in~$V$.
We will take $\Svec$ to
have range~$I_n^m$ and $\Tvec$ to have range~$I_m^n$.
Also define random variables
\[
   K = \sum_{i=1}^m S_i \quad\text{and}\quad
   \varLambda = \frac{K}{mn}.
\]
As usual, $q$ is an abbreviation for $1-p$.

\subsection{Historical notes}\label{s:historical}

The $\G_\tvec$ model has received wide ranging attention, in particular the distribution of the number of isolated vertices. This is also a natural question in the alternative (non-graph) wordings of the model.
It corresponds to the number of empty bins in the allocation model \cite{BarHol89,HMP87,Kh97,MM2010,Vatutin82,Park81,Harris}, the number of uncollected coupons in the collector's problem \cite{Stadje1990, Mahmoud2010}, the number of isolated vertices in the hypergraph model and the number of zero rows in the binary matrix model~\cite{Holst}.
More generally, the number of vertices with a particular degree (or range of degrees) in $\G_\tvec$ has been studied in allocation \cite{Mitwalli2002,Mikhailov77a, Mikhailov77b}, graph \cite{AFH2008, Kordecki} and matrix models \cite{ESM72}.
A different extension on this theme is to study the distribution of the number of draws required to go from $i$ to $j$ non-empty bins \cite{AR2001,KJV2007,Mitwalli2002,Sellke1995,Smythe2011}.
In a similar direction, Khakimullin and Enatskaya studied the distribution of the number of draws to exceed a particular lineup in the bins in the $\G_\tvec$ model \cite{EKh1993} and in the i.i.d.~case which includes the $\G_p$ model as well~\cite{EKh2002}.
The monograph by Kolchin gives many results on $\G_\tvec$ phrased as the balls and bins model~\cite{KSCbook}.

We are interested in asymptotic results as we take $m,n$ roughly equal as they tend to infinity, but another natural option is to fix $m$, the number of vertices in one part, and let $n$, the number of vertices in the other part, tend to infinity.
There seems to be a consistent divide in the literature that when considered as a graph the asymptotics of $\G_\tvec$ are studied with $m, n$ both tending towards infinity while the balls and bins and coupon collection articles (including those cited above) fix $m$ and take $n$ tending toward infinity.
The latter corresponds to fixing the number of bins and taking the number of balls
to infinity or having a fixed number of coupons and letting the number of sampling
rounds tend to infinity.

In the other two probability models on bipartite graphs, $\G_p$ and $\G_k$, two types of results are known: those on the minimum and maximum degrees \cite{AFH2008, Bukor, Palka87Extreme} and those on the number of vertices with a given degree \cite{Kordecki, Palka83, Palka84}.
For results in the digraph counterpart $\digG_p$ see \cite{PalkaSperling} (and below). The model $\G_p$ also appears in papers on ball and bin models. Sometimes the numbers of balls thrown at each stage are allowed to be i.i.d.~random variables \cite{Kh05}.
If we then set these random variables to be binomially distributed with parameters $m, p$ we recover the $\G_p$ model. Godbole et.~al.~\cite{GLS99} study the number of sets of $r$ mutually threatening rooks.
This corresponds to the number of vertices with $h \geq r$ weighted by $\binom{h}{r}$ in our $\G_p$ and $\G_k$ models. 

Of the papers cited, we highlight some which concern the minimum and maximum degrees, a fixed number of the smallest and largest degrees and the  distribution of the $h^{\mathrm{th}}$ largest degree.

Khakimullin determined the asymptotic distribution of the $h^{\mathrm{th}}$ largest degree when the average degree increases faster than $\log m$ \cite{Kh05}.
The model used allowed the numbers of balls allocated at each step to be i.i.d~random variables and so includes both our $\G_p$ and uniform $\G_\tvec$ cases.
This extends an earlier result by the same author which gave the asymptotic distribution of the largest degree~\cite{Kh81}.
 
Palka and Sperling showed that if we fix $p$ such that $np = w(n) \log n = o(n)$, then any fixed number of the smallest and largest degrees are unique in $\digG_p$ and in the uniform $\G_\tvec$ model \cite{PalkaSperling}.
A similar result for the $\digG_\tvec$ model is shown by Palka in \cite{Palka86}, where $\tvec = (d,d,\ldots,d)$ and $d = w(n) \log n = o(n)$.
There is also some work on the degrees in random digraphs by Jaworski and Karo{\'n}ski \cite{JawKar} who showed, in the case that $\tvec=(d,d,\ldots, d)$ and $d=o(n)$, that the minimum vertex degree in $\G_\tvec$ is almost surely the same as that in $\digG_\tvec$.

\nicebreak
\subsection{Asymptotic notation}\label{s:asymptotic}

As we are dealing with asymptotics of functions of many variables,
we must be careful to define our asymptotic notation.

We will tacitly assume that all variables not declared to be constant
are functions of a single underlying index $\ell$ that takes values $1,2,\ldots\,$, and that all
asymptotic statements refer to $\ell\to\infty$.  Thus, the size
parameters $m,n$ are in reality functions $m(\ell)$ and $n(\ell)$,
and a statement like $f(m,n)=O(g(m,n))$ means that there is
a constant $A>0$ such that
$\abs{f(m,n)}\le A\abs{g(m,n)}$ when $\ell$ is large enough.
This should not be cause for alarm, because we will invariably 
impose conditions implying that $m,n\to\infty$ as $\ell\to\infty$.

The expression $\oo(1)$ represents any function of $\ell$ of 
magnitude $O(e^{-n^c})$ for some constant $c>0$.
The constant~$c$ might
be different for different appearances of the notation.
The class $\oo(1)$ is closed under addition, multiplication,
taking positive powers, and multiplication by polynomials
in~$n$.
 
\subsection{Graph models}\label{s:graphmodels}

We now define a sequence of finite probability spaces that we call
``models'', with sample space either $I(m,n)=I^m_n\times I^n_m$
or $I^m_n$. 
The probability
measure for each model will be defined using random variables $(\Svec,\Tvec)$
or $\Svec$, respectively, whose distribution equals the respective
probability measure.  In general our notation will not distinguish between
each probability space and its probability measure.

We first consider six models whose probability measures are
derived from the degrees of a random bipartite graph or digraph~$G$.

\begin{enumerate}
  \item (\textit{$p$-models $\G_p,\digG_p$, for $0<p<1$})
    Generate $G$ by choosing each of the $mn$
    possible edges $u_iv_j$ with probability~$p$, such choices being independent.
    The probability distribution $\G_p=\G_p(m,n)$ on $I_{m,n}$ is that of the
    degree sequence $(\Svec,\Tvec)$ of~$G$. 
    If $m=n$ and 
    the edges $\{u_iv_i\}$ are forbidden, we obtain the probability distribution
    $\digG_p$ instead, corresponding to the degree sequences of a 
    loop-free digraph where each possible directed edge is chosen independently
    with probability~$p$. Note that $G(\svec,\tvec)=0$ for many pairs $(\svec,\tvec)$.
    We have
    \begin{align*}
      \Prob_{\G_p}(\stevent) &= p^k q^{mn-k} G(\svec,\tvec), \\
      \Prob_{\digG_p}(\stevent) &= p^k q^{n^2-n-k} \dig G(\svec,\tvec).
    \end{align*}
    where $q=1-p$ and $k=\sum_{i=1}^m s_i$.
  \item (\textit{$k$-models $\G_k, \digG_k$, for integer $k\ge 0$})
    Generate $G$ by choosing each of the bipartite graphs
    on $U\cup V$ having $k$ edges, with equal probability.
    The probability distribution $\G_k=\G_k(m,n)$ on $I_{m,n}$ is that of the
    degree sequence $(\Svec,\Tvec)$ of~$G$. 
    If $m=n$ and the edges $\{u_iv_i\}$ are forbidden, we obtain the 
    distribution $\digG_k=\digG_k(n)$ of the degree-sequences for
    the uniform probability space of all loop-free digraphs with $k$ edges. 
    We have
    \begin{align*}
      \Prob_{\G_k}(&\stevent) \\
      &=   \begin{cases}
                   \displaystyle \binom{mn}{k}^{\!\!-1} G(\svec,\tvec),
                      & \text{if~}\sum_{i=1}^m s_i=\sum_{j=1}^n t_j=k;\\[1ex]
                   0, & \text{otherwise},
          \end{cases} \displaybreak[0] \\
    \Prob_{\digG_k}(&\stevent) \\
    &=    \begin{cases}
              \displaystyle \binom{n^2-n}{k}^{\!\!-1} \dig G(\svec,\tvec),
                & \text{if~}\sum_{i=1}^n s_i=\sum_{j=1}^n t_j=k;\\[1ex]
             0, & \text{otherwise},
          \end{cases}
    \end{align*}
  \item (\textit{$\tvec$-models $\G_\tvec,\digG_\tvec$, for $\tvec\in I_m^n$})
  Generate $G$ by choosing each of the bipartite graphs
    on $U\cup V$ having $\tvec(G)=\tvec$, with equal probability.
    {For consistency we can define the random variable $\Tvec$ to
    have the value $\tvec$, but since this is constant we will
    define our probability spaces using $\Svec$ only.}
    The probability distribution $\G_\tvec=\G_\tvec(m)$ on $I_n^m$ is that of the
    degree sequence $\Svec$ of~$G$ in~$U$. 
    If $m=n$ and 
    the edges $\{u_iv_i\}$ are forbidden, we obtain the distribution
     $\digG_\tvec=\digG_\tvec(n)$ of the in-degrees for the uniform probability
    distribution of all loop-free digraphs with fixed out-degrees~$\tvec$.
    For a given $\tvec\in I_m^n$, we have
    \begin{align*}
      \Prob_{\G_\tvec}(\Svec=\svec) &=
          \tprod^{\!\!-1} G(\svec,\tvec), \\
      \Prob_{\digG_\tvec}(\Svec=\svec) &=
        \dtprod^{\!\!-1} \dig G(\svec,\tvec).
   \end{align*} 
\end{enumerate}

The probability spaces $\G_p$, $\G_k$ and $\G_\tvec$ are clearly related,
by mixing and conditioning.
In particular, for any event
$E \subseteq I_{m,n} $ or $E'\subset I_n^m$, the following hold.
Note that the first relationships on lines~\eqref{relM} and~\eqref{relt} are
independent of $p$ and assume $0<p<1$.
\begin{align}
   \Prob_{\G_p}(E) &= \sum_{k=0}^{mn} \binom{mn}{k} p^k q^{mn-k} \Prob_{\G_k}(E)
           = \sum_{\tvec\in I_m^n}\,
             \biggl(\, \tprod p^{t_j} q^{m-t_j} \biggr) \Prob_{\G_\tvec}(E),
                       \label{relp} \displaybreak[0] \\
   \Prob_{\G_k}(E) &= \Prob_{\G_p}(E \, \bigr| \, {K=k})\,
         = \sum_{\tvec\suchthat\sum_{j=1}^n t_j=k} \binom{mn}{k}^{\!\!-1}
            \tprod \Prob_{\G_\tvec}(E), \label{relM}\\
  { \Prob_{\G_\tvec}(E') }
    &= {\Prob_{\G_p}(E'\times\{\tvec\} \, \bigr| \, {\Tvec=\tvec}) 
     = \Prob_{\G_{k=\sum_{j}t_j } }(E'\times\{\tvec\} \, \bigr| \,{\Tvec=\tvec}),} \label{relt}
\end{align}
with similar relations between $\digG_p$, $\digG_k$ and $\digG_\tvec$.

Note that the separate distributions of $\Svec$ and $\Tvec$ in
$\G_p$ and $\G_k$ are elementary.  In~$\G_p$, the components
of $\Svec$ have independent binomial distributions, while in
the $\G_k$ model
$\Svec$ has a multivariate hypergeometric distribution.
The difficulty is in quantifying the dependence between
$\Svec$ and $\Tvec$ when all $m+n$ components are 
considered together.

\subsection{Binomial models}\label{s:binomialmodels}

Our aim is to compare the degree sequence distributions defined above
to some distributions derived from independent binomials.  Our motivating
observation is the known marginal distributions of $\Svec$ and $\Tvec$
in the models $\G_p$ and $\G_k$.

\begin{enumerate}
  \item (\textit{Independent models\/ $\I_p,\digI_p$, for $0<p<1$})
  Generate $m$ components distributed $\Bin(n,p)$ and $n$ components
   distributed $\Bin(m,p)$, all $m+n$ components being independent.
   The joint distribution on $I_{m,n}$ is $\I_p=\I_p(m,n)$.
   If instead we have $m=n$ and the $2n$ components are all distributed
   $\Bin(n{-}1,p)$, the
   joint distribution on $I_{n,n}$ is $\digI_p=\digI_p(n)$. We have
    \begin{align*}
      \Prob_{\I_p}(&\stevent) \\
       &= p^{\sum_i s_i+\sum_j t_j} q^{2mn-\sum_i s_i-\sum_j t_j} \sprod\tprod, \\
    \Prob_{\digI_p}(&\stevent) \\
  &=    p^{\sum_i s_i+\sum_j t_j} q^{2n^2-2n-\sum_i s_i-\sum_j t_j} \dsprod\dtprod. 
    \end{align*}
   \item (\textit{Binomial $p$-models $\B_p,\digB_p$, for $0<p<1$})
   The distribution $\B_p=\B_p(m,n)$ on $I_{m,n}$
   is the conditional distribution of
   $\I_p$ subject to $\sum_{i=1}^m S_i=\sum_{j=1}^n T_j$.
   For $m=n$,
   the distribution $\digB_p=\digB_p(n)$ on $I_{n,n}$ is obtained
   from $\digI_p$ by the same conditioning.
   We have
    \begin{align*}
      \Prob_{\B_p}&(\stevent)\\  
      &=
          \begin{cases}
                   \displaystyle \frac
                      {\Prob_{\I_p}(\stevent)}
                      {\Prob_{\I_p}\(\sum_{i=1}^m S_i=\sum_{j=1}^n T_j\)},
                      & \text{if~}\sum_{i=1}^m s_i=\sum_{j=1}^n t_j;\\[1ex]
                   0, & \text{otherwise},
          \end{cases}
    \end{align*}
    and similarly for $\digB_p$.
   \item (\textit{Binomial $k$-models $\B_k,\digB_k$, for integer {$k\ge0$}})
   The distribution $\B_k=\B_k(m,n)$ on $I_{m,n}$
   is the conditional distribution of
   $\I_p$ subject to $\sum_{i=1}^m S_i=\sum_{j=1}^n T_j = k$.
   For $m=n$, $\digB_k=\digB_k({m,n})$ is derived from $\digI_p$ in the
   same way.  In both cases, the distribution doesn't depend on~$p$.
   We have
    \begin{align*}
     & \Prob_{\B_k}(\Svec=\svec\wedge\Tvec=\tvec)\\
          &{\quad}=\begin{cases}
                   \displaystyle \binom{mn}{k}^{\!\!-2}\sprod\tprod,
                      & \text{if~}\sum_{i=1}^m s_i=\sum_{j=1}^n t_j=k;\\[1ex]
                   0, & \text{otherwise},
          \end{cases} \\
       &\Prob_{\digB_k}(\Svec=\svec\wedge\Tvec=\tvec)\\
             &{\quad}=\begin{cases}
                     \displaystyle \binom{n^2-n}{k}^{\!\!-2}\dsprod\dtprod,
                      & \text{if~}\sum_{i=1}^n s_i=\sum_{j=1}^n t_j=k;\\[1ex]
                   0, & \text{otherwise}.
               \end{cases}
    \end{align*}
    In each case $\Svec$ and $\Tvec$ have independent multivariate
    hypergeometric distributions.
   \item (\textit{Binomial $\tvec$-models $\B_\tvec,\digB_\tvec$, for $\tvec\in I_m^n$})
     The distribution $\B_\tvec=\B_\tvec(m,n)$ on $I_n^m$ is the
     distribution of $\Svec$ when $(\Svec,\Tvec)$ has distribution
     $\B_k$ for $k=\sum_{j=1}^n t_j$.
     For $m=n$, $\digB_\tvec=\digB_\tvec(n)$ is derived from 
     $\digB_k$ in the same way.
     For a given $\tvec\in I_m^n$, we have
     \begin{align*}
      \Prob_{\B_\tvec}(\Svec=\svec)&=\begin{cases}
                    \displaystyle \binom{mn}{k}^{\!\!-1} \sprod
                      & \text{if~}\sum_{i=1}^m s_i=\sum_{j=1}^n t_j;\\[1ex]
                   0, & \text{otherwise},
          \end{cases} \\
     \Prob_{\digB_\tvec}(\Svec=\svec)&=\begin{cases}
                     \displaystyle \binom{n^2-n}{k}^{\!\!-1} \dsprod
                      & \text{if~}\sum_{i=1}^n s_i=\sum_{j=1}^n t_j;\\[1ex]
                   0, & \text{otherwise},
               \end{cases}
    \end{align*}
    In each case, $\Svec$ has a multivariate hypergeometric distribution.
  \item (\textit{Integrated $p$-models $\V_p,\digV_p$, for $0<p<1$})
    The distribution $\V_p=\V_p(m,n)$ on $I_{m,n}$ is a mixture of
    $\B_{p'}$ distributions, while for $m=n$ the distribution
    $\digV_p=\digV_p(n)$ on $I_{n,n}$ is a mixture of
    $\digB_{p'}$ distributions.
    Let
    \begin{align*}
        K_p(p') &= \biggl(\frac {mn}{\pi pq}\biggr)^{\!1/2}
             \exp\biggl( -\frac{mn}{pq} (p'-p)^2\biggr), \\[0.5ex]
        V(p) &= \int_0^1 \!K_p(p')\,dp'.
    \end{align*}    
    Then we define 
    \begin{align*}
     \Prob_{\V_p} (\stevent) = V(p)^{-1} \int_0^1 
        \! K_p(p') \Prob_{\B_{p'}} (\stevent)\,dp', \\
     \Prob_{\digV_p} (\stevent) = V(p)^{-1} \int_0^1 
        \! K_p(p') \Prob_{\digB_{p'}} (\stevent)\,dp'.
    \end{align*}
\end{enumerate}

Our main theorems will show that, under certain conditions, $\G_p$
is very close to $\V_p$, $\G_k$ to $\B_k$, and $\G_\tvec$ to $\B_\tvec$.
Similar relationships hold for the digraph models.

\subsection{The main theorems}\label{s:main}

Consider positive integers $m,n$ and real variable $x\in(0,1)$.
(As mentioned in Section~\ref{s:asymptotic}, these variables
are actually functions of a background index~$\ell$.)
For constants $a,\eps>0$, we say that $(m,n,x)$ is 
\textit{$(a,\eps)$-acceptable} if
\begin{gather}
  m,n\to\infty \text{ with } m=o(n^{1+\eps}), n=o(m^{1+\eps}), \text{ and }
    \notag \\
 \frac{(1-2x)^2}{4x(1-x)}
      \biggl( 1 + \frac{5m}{6n} + \frac{5n}{6m} \biggr)<a\log n.\label{C2}
\end{gather}
Note that~\eqref{C2} implies $x(1-x)=\Omega\((\log n)^{-1}\)$.

For $\eps>0$, a vector $(x_1,x_2,\ldots,x_N)$ will be called
\textit{$\eps$-regular} if
\[
   x_i-\frac1N\sum_{j=1}^N x_j=O(N^{1/2+\eps})
\]
uniformly for $i=1,\ldots,N$.
We say that $(\svec,\tvec)$ is \textit{$\eps$-regular} if
$\sum_{i=1}^m s_i=\sum_{j=1}^n t_j$ and
$\svec, \tvec$ are both $\eps$-regular.

Finally, define $\lambda_m(\tvec) = (mn)^{-1}\sum_{j=1}^n t_j$.
If $\sum_{i=1}^m s_i=\sum_{j=1}^n t_j$, the common value of
$\lambda_n(\svec)$ and $\lambda_m(\tvec)$ will be denoted by~$\lambda$.
Note that $\lambda$ is the value in $[0,1]$ that gives the density
of a bipartite graph with degrees $(\svec,\tvec)$, relative to $K_{m,n}$.
In the case of loop-free digraphs, $\lambda\in[0,1-1/n]$.

We now state the theorems that are the main contribution of this
paper.  Their proofs will be given in Section~\ref{s:proofs}, after
some preliminary lemmas are proved in Section~\ref{s:properties}.

\begin{thm}\label{twosides}
 Let constants $a,b>0$ satisfy $a+b<\tfrac12$.
 Then there is a constant $\eps=\eps(a,b)>0$ such that the
   following holds. Let $\D$ and $\D'$ be probability
   spaces on $I_{m,n}$ in one of the following cases.
  \begin{enumerate} \itemsep=0pt
    \item[(a)] $(m,n,p)$ is $(a,\eps)$-acceptable and $(\D,\D')=(\G_p,\V_p)$,
    \item[(b)] $m=n$, $(n,n,p)$ is $(a,\eps)$-acceptable and $(\D,\D')=(\digG_p,\digV_p)$,
    \item[(c)] $(m,n,k/mn)$ is $(a,\eps)$-acceptable and $(\D,\D')=(\G_k,\B_k)$,
    \item[(d)] $m=n$, $(n,n,k/n^2)$ is $(a,\eps)$-acceptable and $(\D,\D')=(\digG_k,\digB_k)$,
  \end{enumerate}
  Then there is an event $B=B(\D)\subseteq I_{m,n}$
 such that $\Prob_{\D}(B) = \oo(1)$,
 and uniformly for $(\svec,\tvec)\in I_{m,n}\setminus B$,
 \[
    \Prob_{\D}(\stevent) =
    \(1 + O(n^{-b})\) \Prob_{\D'}(\stevent).
 \]
 Moreover, let $X:I_{m,n} \to \Reals$ be a random variable and
 let $E\subseteq I_{m,n}$ be an event. Then,
 \begin{align*}
   \Prob_{\D}(E) &=
    \(1 + O(n^{-b})\) \Prob_{\D'}(E) + \oo(1), \\[1ex]
   \expect_{\D}(X) &= \expect_{\D'}(X)
            + O(n^{-b}) \expect_{\D'}(\abs X)
            + \oo(1) \max_{(\svec,\tvec)\in I_{m,n}} \abs X,\\
   \Var_{\D}(X) &= \(1+O(n^{-b})\) \Var_{\D'}(X)
	    + \oo(1) \max_{(\svec,\tvec)\in I_{m,n}} X^2.
 \end{align*} 
\end{thm}

\begin{cor}\label{vanishing}
  Let $E\subseteq I_{m,n}$ be an event.  Then, under
  the conditions of Theorem~\ref{twosides}, 
  \begin{align*}
   \text{if } \Prob_{\B_p}(E) &\to 0 \text{ then }
    \Prob_{\G_p}(E) = \oo(1) + o(1)\ssqrt{\Prob_{\B_p}(E)}\,, \text{ and} \\ 
   \text{if } \Prob_{\G_p}(E) &\to 0 \text{ then }
    \Prob_{\B_p}(E) = \oo(1) + o(1)\ssqrt{\Prob_{\G_p}(E)}\,. \\
  \intertext{Similarly, for $m=n$,}
  \text{if }\Prob_{\digB_p}(E) &\to 0 \text{ then }
      \Prob_{\digG_p}(E) = \oo(1) + o(1)\ssqrt{\Prob_{\digB_p}(E)}\,,  \text{ and} \\
 \text{if }\Prob_{\digG_p}(E) &\to 0 \text{ then }
      \Prob_{\digB_p}(E) = \oo(1) + o(1)\ssqrt{\Prob_{\digG_p}(E)}\,.
  \end{align*}
  In particular, $\G_p$ and $\B_p$ are contiguous; i.e.,
   $\Prob_{\G_p}(E) \to 0$ if and only if $\Prob_{\B_p}(E) \to 0$,
   and similarly for $\digG_p$ and $\digB_p$.
\end{cor}

\begin{thm}\label{oneside}
 Let constants $a,b>0$ satisfy $a+b<\tfrac12$.
 Then there is a constant $\eps=\eps(a,b)>0$ such that the
   following holds whenever   
   $(m,n,\lambda_m(\tvec))$ is $(a,\eps)$-acceptable and
    $\tvec$ is $\eps$-regular.
    Let $\D$ and $\D'$ be probability
   spaces on $I_n^m$ in one of the following cases.
  \begin{enumerate} \itemsep=0pt
    \item[(a)]  $(\D,\D')=(\G_\tvec,\B_\tvec)$,
    \item[(b)] $m=n$ and $(\D,\D')=(\digG_\tvec,\digB_\tvec)$.
  \end{enumerate}
  Then there is an event $B=B(\D)\subseteq I_n^m$
 such that $\Prob_{\D}(B) = \oo(1)$,
 and uniformly for $\svec\in I_n^m\setminus B$,
 \[
    \Prob_{\D}(\Svec=\svec) =
    \(1 + O(n^{-b})\) \Prob_{\D'}(\Svec=\svec).
 \]
 Moreover, let $X:I_n^m \to \Reals$ be a random variable and
 let $E\subseteq I_n^m$ be an event. Then,
 \begin{align*}
   \Prob_{\D}(E) &=
    \(1 + O(n^{-b})\) \Prob_{\D'}(E) + \oo(1), \\[1ex]
   \expect_{\D}(X) &= \expect_{\D'}(X)
            + O(n^{-b}) \expect_{\D'}(\abs X)
            + \oo(1) \max_{\svec\in I_n^m}\, \abs X,\\
   \Var_{\D}(X) &= \(1+O(n^{-b})\) \Var_{\D'}(X)
	    + \oo(1) \max_{\svec\in I_n^m} X^2.
 \end{align*} 
\end{thm}

A weak corollary of these theorems is that each of the distribution
pairs $(\G_p,\V_p)$, $(\digG_p,\digV_p)$, $(\G_k,\B_k)$,
$(\digG_k,\digB_k)$, $(\G_\tvec,\B_\tvec)$ and $(\digG_\tvec,\digB_\tvec)$
have total variation distance $O(n^{-b})$ under the stated conditions.

The proofs of the theorems will be presented in
Sections~\ref{s:properties} and~\ref{s:proofs}.  Meanwhile,
we will give an example that illustrates how the theorems can
be applied.

\nicebreak
\section{Some useful lemmas and an example}\label{s:example}

We first record a few elementary properties.

\begin{lemma}\label{Bpdenom}
If\/ $\sum_{i=1}^m s_i = \sum_{j=1}^n t_j = k$ and $pqmn\to\infty$, then
\begin{align*}
  &\Prob_{\B_p}(\stevent) \\
  &{\quad}=      \(2+O((pqmn)^{-1})\)\, p^{2k} q^{2mn-2k} \sqrt{\pi pqmn}\,
        \sprod\tprod, \displaybreak[0] \\
  &\Prob_{\digB_p}(\stevent) \\
       &{\quad}= \(2+O((pqn^2))^{-1})\)\, p^{2k} q^{2n^2-2n-2k} \sqrt{\pi pqn(n{-}1)}\,
       \dsprod\dtprod.
 \end{align*}
 uniformly over $\svec,\tvec$.
\end{lemma}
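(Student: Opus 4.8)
The plan is to reduce the statement to a single Fourier--Laplace estimate. By definition $\B_p$ is $\I_p$ conditioned on $\sum_{i=1}^m S_i=\sum_{j=1}^n T_j$, so when $\sum_{i=1}^m s_i=\sum_{j=1}^n t_j=k$ we have
\[
  \Prob_{\B_p}(\stevent)=\frac{\Prob_{\I_p}(\stevent)}{\Prob_{\I_p}\(\sum_{i=1}^m S_i=\sum_{j=1}^n T_j\)}.
\]
Substituting $\sum_i s_i=\sum_j t_j=k$ into the formula for $\Prob_{\I_p}$ collapses the numerator to $p^{2k}q^{2mn-2k}\sprod\tprod$, which already carries the correct $\svec,\tvec$-dependence. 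Hence the entire task is to show, uniformly, that the normalising denominator satisfies
\[
  \Prob_{\I_p}\(\sum_{i=1}^m S_i=\sum_{j=1}^n T_j\)=\frac{1+O((pqmn)^{-1})}{2\sqrt{\pi pqmn}},
\]
since dividing by this quantity turns the prefactor $1$ into $\bigl(2+O((pqmn)^{-1})\bigr)\sqrt{\pi pqmn}$, exactly as claimed.

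To evaluate the denominator, note that in $\I_p$ all $m+n$ components are independent, so $X:=\sum_{i=1}^m S_i$ and $Y:=\sum_{j=1}^n T_j$ are independent, and each is a sum of independent binomials totalling $\Bin(mn,p)$. Thus I must compute $\Prob(X=Y)$ for independent $X,Y\sim\Bin(mn,p)$. I would use the characteristic-function representation of the diagonal probability, together with $\lvert q+pe^{i\theta}\rvert^2=1-2pq(1-\cos\theta)$, to write
\[
  \Prob(X=Y)=\frac1{2\pi}\int_{-\pi}^{\pi}\bigl\lvert q+pe^{i\theta}\bigr\rvert^{2mn}\,d\theta
            =\frac1{2\pi}\int_{-\pi}^{\pi}\bigl(1-2pq(1-\cos\theta)\bigr)^{mn}\,d\theta.
\]
The integrand has a single interior maximum of value $1$ at $\theta=0$, decreasing monotonically to $(2p-1)^2$ at $\theta=\pm\pi$, so there is no competing saddle.

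The remaining work is a standard Laplace analysis of this single-peak integral. For $\theta$ bounded away from $0$ the integrand is at most $(1-cpq)^{mn}$ for some constant $c>0$, which is $\oo(1)$ relative to the main term once $pqmn\to\infty$. Near $0$ I would rescale by the natural width $(pqmn)^{-1/2}$, expand $\log\bigl(1-2pq(1-\cos\theta)\bigr)=-pq\theta^2+O(pq\theta^4)$, and integrate: the Gaussian leading term gives $\tfrac1{2\pi}\sqrt{\pi/(pqmn)}=1/(2\sqrt{\pi pqmn})$, while the quartic correction contributes exactly the relative error $O((pqmn)^{-1})$. The main obstacle is making every estimate uniform in $p$: when $p$ is near $0$ or $1$ the peak is broad and the tail bound weakens, so one must keep explicit track of how each constant depends on $pq$ and verify that the single hypothesis $pqmn\to\infty$ controls both the tail and the expansion.

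The digraph statement follows by the identical argument. In $\digI_p$ all $2n$ components are $\Bin(n{-}1,p)$, so $X=\sum_{i=1}^n S_i$ and $Y=\sum_{j=1}^n T_j$ are independent $\Bin(n(n{-}1),p)$ variables; the numerator becomes $p^{2k}q^{2n^2-2n-2k}\dsprod\dtprod$, and running the same Fourier--Laplace computation with $mn$ replaced by $n(n{-}1)=n^2-n$ produces the factor $\sqrt{\pi pq\,n(n{-}1)}$ together with the error term $O((pqn^2)^{-1})$.
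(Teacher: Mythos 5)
Your proposal is correct and follows essentially the same route as the paper: both reduce the lemma to the conditional-probability formula, observe that the numerator collapses to $p^{2k}q^{2mn-2k}\sprod\tprod$, and then asymptotically evaluate $\Prob_{\I_p}\(\sum_{i=1}^m S_i=\sum_{j=1}^n T_j\)$ for two independent $\Bin(mn,p)$ variables. The paper writes this denominator as $\sum_{k=0}^{mn}\binom{mn}{k}^2p^{2k}q^{2mn-2k}$ and cites ``standard methods'' for the estimate $\frac{1}{2\sqrt{\pi pqmn}}\(1+O((pqmn)^{-1})\)$; your Fourier--Laplace computation via $\frac1{2\pi}\int_{-\pi}^{\pi}\(1-2pq(1-\cos\theta)\)^{mn}\,d\theta$ is precisely a correct instantiation of those standard methods, including the uniformity in $p$ which follows from the global bound $1-\cos\theta\ge 2\theta^2/\pi^2$.
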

\begin{proof}
  In $\I_p$, both $\sum_{i=1}^m S_i$ and $\sum_{j=1}^n T_j$ have the
   distribution $\Bin(mn,p)$.  Therefore
  \begin{align}
    \Prob_{\I_p}\Bigl({\textstyle \sum_{i=1}^m S_i=\sum_{j=1}^n T_j}\Bigr)
    &= \sum_{k=0}^{mn} \binom{mn}{k}^{\!2} p^{2k}q^{2mn-2k} \label{sumsq}\\
    &= \frac{1}{2\sqrt{\pi pqmn}}\(1+O((pqmn)^{-1})\).\notag
\end{align}
For the last step we use that the central part of the sum is
approximately normal and sum it with the Euler-Maclaurin formula, while
the two tails of the sum are negligible in comparison.
The first claim now
 follows from the formulas for $\Prob_{\B_p}(\stevent)$
 and $\Prob_{\I_p}(\stevent)$.
 The second claim is proved in the same manner.
\end{proof}

\begin{lemma}\label{Vpest}
If $pqmn\to\infty$, then
\[
   V(p) = 1 - o\(e^{-pqmn}\).
\]
\end{lemma}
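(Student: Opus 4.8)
The plan is to recognize that $K_p(p')$ is, up to the restriction of the domain of integration, a Gaussian density. Writing $\sigma^2=pq/(2mn)$, one checks directly that $K_p(p')=(2\pi\sigma^2)^{-1/2}\exp\(-(p'-p)^2/(2\sigma^2)\)$, the density of a normal random variable $X$ with mean $p$ and variance $\sigma^2$; indeed matching exponents gives $mn/pq=1/(2\sigma^2)$ and the prefactors then agree automatically. Consequently $\int_{-\infty}^{\infty}K_p(p')\,dp'=1$, and since $V(p)=\int_0^1 K_p(p')\,dp'$ we have
\[
  1-V(p)=\int_{-\infty}^0 K_p(p')\,dp'+\int_1^{\infty}K_p(p')\,dp'
        =\Prob(X\le 0)+\Prob(X\ge 1).
\]
The entire task thus reduces to bounding these two tail probabilities and comparing them with $e^{-pqmn}$.

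For the lower tail I would standardize: $\Prob(X\le 0)=\Prob(Z\ge p/\sigma)$ with $Z$ standard normal and $p/\sigma=\sqrt{2mnp/q}$. Applying the elementary Mills bound $\Prob(Z\ge t)\le(t\sqrt{2\pi})^{-1}e^{-t^2/2}$ with $t=\sqrt{2mnp/q}$ yields
\[
  \Prob(X\le 0)\le\frac{1}{2\sqrt{\pi mnp/q}}\,e^{-mnp/q}.
\]
Symmetrically, since $1-p=q$ and $(1-p)/\sigma=\sqrt{2mnq/p}$,
\[
  \Prob(X\ge 1)\le\frac{1}{2\sqrt{\pi mnq/p}}\,e^{-mnq/p}.
\]
I would then compare with $e^{-pqmn}$ using the two inequalities $mnp/q=pqmn/q^2\ge pqmn$ and $mnq/p=pqmn/p^2\ge pqmn$, both valid because $0<p,q<1$. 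These simultaneously give $e^{-mnp/q}\le e^{-pqmn}$ and $(mnp/q)^{-1/2}\le(pqmn)^{-1/2}$, and likewise for the second tail, so that
\[
  1-V(p)\le\frac{1}{\sqrt{\pi\,pqmn}}\,e^{-pqmn}.
\]
As $pqmn\to\infty$ the prefactor tends to $0$, which is precisely the assertion $1-V(p)=o(e^{-pqmn})$.

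The only genuinely delicate point is this final comparison, and I expect it to be the main thing to get right. A crude attempt would try to show $e^{-mnp/q}=o(e^{-pqmn})$ from the exponents alone, but this fails when $p$ is as small as $\Theta(1/n)$ (with $m=n$): there $mnp/q$ and $pqmn$ are of the same order, the two exponentials are comparable, and the exponent difference gives nothing. What rescues the argument is the polynomial prefactor $(mnp/q)^{-1/2}$ coming from the Gaussian tail inequality, which tends to $0$ exactly because $mnp/q\ge pqmn\to\infty$. The careful step, therefore, is to retain that prefactor rather than discard it; everything else is routine.
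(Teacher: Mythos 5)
Your proof is correct and follows exactly the paper's (one-line) argument: the paper also observes that $K_p(p')$ is a normal density with mean $p$ and variance $pq/(2mn)$ and invokes standard normal tail bounds. Your fleshed-out version is sound, and your closing remark is well taken---the polynomial prefactor $(mnp/q)^{-1/2}$ from the Mills bound is genuinely needed when, say, $p=\Theta(1/n)$, since then the exponent gap $mnp/q-pqmn=p^2mn(1+q)/q$ can be $O(1)$ and the exponentials alone do not yield the $o(\cdot)$.
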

\begin{proof}
 $K_p(p')$ is a normal density with mean $p$ and variance $pq/(2mn)$,
 so we just need to apply standard normal tail bounds to the definition of $V(p)$.
\end{proof}

The next lemma demonstrates how statistics of variables in $\B_p$
can be converted into statistics in $\V_p$.
Note that $X$ can be the indicator variable of an event, so the
lemma applies to probabilities as well.

\begin{lemma}[{\cite{degseq}}]\label{BtoV}
Let $X$ be a random variable on $I_{m,n}$.  Then
\begin{align*}
  \expect_{\V_p}(X) &= V(p)^{-1} \int_0^1 \! K_p(p')\expect_{\B_{p'}}(X)\,dp',\\
  \Var_{\V_p}(X) &= V(p)^{-1} \int_0^1 
        \! K_p(p')\( \Var_{\B_{p'}}(X) 
             + (\expect_{\V_{p}}(X)-\expect_{\B_{p'}}(X))^2 \)\,dp'.
\end{align*}
\end{lemma}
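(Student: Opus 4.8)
The plan is to recognize $\V_p$ as a mixture distribution and then read off both identities from the laws of total expectation and total variance. Concretely, the defining formula for $\Prob_{\V_p}(\stevent)$ exhibits $\V_p$ as an average of the distributions $\B_{p'}$ weighted by $V(p)^{-1}K_p(p')$. Since $V(p)=\int_0^1 K_p(p')\,dp'$ by definition, the weight $d\nu(p'):=V(p)^{-1}K_p(p')\,dp'$ is a probability measure on $[0,1]$. Thus $\V_p$ describes the two-stage experiment in which $p'$ is first drawn from $\nu$ and then $(\Svec,\Tvec)$ is drawn from $\B_{p'}$, and the two displayed identities are precisely the laws of total expectation and total variance for this hierarchy.

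For the expectation I would expand $\expect_{\V_p}(X)=\sum_{(\svec,\tvec)\in I_{m,n}} X(\svec,\tvec)\,\Prob_{\V_p}(\stevent)$, substitute the integral definition of $\Prob_{\V_p}(\stevent)$, and interchange the finite sum over $I_{m,n}$ with the integral over $p'$. The interchange needs no justification beyond finiteness of $I_{m,n}$, and the inner sum is by definition $\expect_{\B_{p'}}(X)$, giving the first formula at once.

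For the variance I would begin from $\Var_{\V_p}(X)=\expect_{\V_p}(X^2)-\(\expect_{\V_p}(X)\)^2$, apply the expectation formula to the variable $X^2$, and substitute $\expect_{\B_{p'}}(X^2)=\Var_{\B_{p'}}(X)+\(\expect_{\B_{p'}}(X)\)^2$. Abbreviating $\mu=\expect_{\V_p}(X)$ and $\mu(p')=\expect_{\B_{p'}}(X)$, the expectation formula also yields $\mu=\int_0^1 \mu(p')\,d\nu(p')$. The rest is algebra: expanding $\(\mu-\mu(p')\)^2$ under the integral and using $\int_0^1 d\nu=1$ together with $\int_0^1 \mu(p')\,d\nu(p')=\mu$, one checks that $\int_0^1\(\Var_{\B_{p'}}(X)+\(\mu-\mu(p')\)^2\)\,d\nu(p')$ equals $\int_0^1 \expect_{\B_{p'}}(X^2)\,d\nu(p')-\mu^2=\expect_{\V_p}(X^2)-\mu^2$, which is $\Var_{\V_p}(X)$.

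No step presents a genuine obstacle; the result is a routine specialisation of the total-variance decomposition to the mixture $\nu$. The only points needing care are the sum--integral interchange, which is trivial because $I_{m,n}$ is finite, and the bookkeeping in the variance rearrangement, where the cross term contributes $-2\mu\int_0^1 \mu(p')\,d\nu(p')=-2\mu^2$ and the surviving $\int_0^1\(\mu(p')\)^2\,d\nu(p')$ recombines with $\int_0^1\Var_{\B_{p'}}(X)\,d\nu(p')$ to reform $\expect_{\V_p}(X^2)$. Once one notes that $V(p)^{-1}K_p(p')\,dp'$ is a probability measure, both identities follow transparently.
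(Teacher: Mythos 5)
Your proof is correct: the paper states Lemma~\ref{BtoV} without proof, attributing it to \cite{degseq}, and your argument --- recognizing $V(p)^{-1}K_p(p')\,dp'$ as a probability measure on $[0,1]$ so that $\V_p$ is a mixture of the $\B_{p'}$, then applying the laws of total expectation and total variance (with the sum--integral interchange trivial since $I_{m,n}$ is finite) --- is exactly the standard computation behind the cited result. Nothing further is needed.
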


\subsection{Vertices of low degree in random digraphs}

We now provide an example of how Theorem~\ref{twosides}(b)
can be applied to random digraphs.  Since this is only an illustration,
we will not attempt to treat all values of the parameters or to obtain
the best possible error terms.

Let $G$ be a random loop-free digraph on $n$ vertices and edge
probability $p=\tfrac12$.
For convenience we will assume that $n$ is even, though treatment
of the odd case would be much the same.
As usual, $S_1,\ldots,S_n$ are the out-degrees of the vertices, and
$T_1,\ldots,T_n$ are the in-degrees.
Let $X,Y$ be random variables which count the vertices
with out-degree at most $\tfrac n2-1$, and the vertices with
in-degree at most $\tfrac n2-1$, respectively.
It is easy to see that each of $X$ and $Y$ has a distribution
exactly $\Bin(n,\tfrac12)$, but that $X$ and $Y$ are not
independent.
Our aim will be to find their asymptotic joint distribution.

We will first calculate some properties of binomial distributions truncated
at the centre. Application of model $\digV_{1/2}$ requires us to consider
probabilities close to~$\tfrac12$.

\begin{lemma}\label{truncated}
 The following hold when $\eps>0$ is sufficiently small.
 Let $n$ be even and
 let $p=\tfrac12+\delta$ where $\delta=O(n^{-1+\eps})$.
 For $0\le k\le \frac n2-1$ define
 \[ b(p,k) = \binom{n-1}{k}p^k(1-p)^{n-1-k} \]
 and $P(\delta) = \sum_{k=0}^{n/2-1} b(p,k)$.
 Then
 \begin{equation}\label{Pdelta}
    P(\delta) = \dfrac12 - \delta\,\sqrt{\frac{2n}{\pi}}
         + O(n^{-1-\eps}).
 \end{equation}
 Now define two random variables, $Z^-_\delta$ by truncating $\Bin(n-1,p)$
 to $[0,\tfrac12 n-1]$, and $Z^+_\delta$ by truncating $\Bin(n-1,p)$
  to $[\tfrac12 n,n-1]$.
 Then
 \begin{align}
    \expect(Z^-_\delta) &= \dfrac12 n - \sqrt{\frac{n}{2\pi}} + O(n^{1/2-\eps}),
     &  
     \expect(Z^+_\delta) &= \dfrac12 n + \sqrt{\frac{n}{2\pi}} + O(n^{1/2-\eps}),\\
    \Var(Z^-_\delta) &= \frac{(\pi-2)n}{4\pi} + O(n^{1-\eps}),
    &
    \Var(Z^+_\delta) &= \frac{(\pi-2)n}{4\pi} + O(n^{1-\eps}).
 \end{align}
\end{lemma}
\begin{proof}
  Define $s_j=b(p,\frac12 n-1-j)$.  From~\cite{deghalfn} we have
  for $j=O(n^{1/2+\eps})$ that
  \begin{align*}
     s_0 &= \(1 - 2\delta - 2\delta^2 n - 1/(4n) + O(n^{-1-\eps})\)\,\sqrt{\frac{2}{\pi n}},
       \text{ and} \\
     \frac{s_j}{s_0} &= \( 1 - 4j^4/(3n^3) + O(n^{-1-\eps})\)
        \exp\( -4\delta j - 2j(j+1)/n\).
  \end{align*}
  By summing $s_j/s_0$ using the Euler-Maclaurin method, as in~\cite{deghalfn},
  we obtain the formula for $P(\delta)$.
  Similarly summing $js_j/s_0$ and $j^2s_j/s_0$, we obtain the formulas
  for $\expect(Z^-_\delta)$ and $\Var(Z^-_\delta)$.

  Finally, note that the truncations divide the range exactly in half, and
  so we have $1-P(\delta)=P(-\delta)$, $\expect(Z^-_\delta)+\expect(Z^+_{-\delta})=n-1$
  and $\Var(Z^-_\delta)=\Var(Z^+_{-\delta})$.
  This proves the statistics for $Z^+_\delta$.
\end{proof}

\begin{thm}\label{digraphs}
  Suppose $n$ is even and $x,y$ are integers with $x,y=O(n^{-1/2+\eps})$ 
  for sufficiently small $\eps>0$.  Then
  \[
    \Prob_{\digG_{1/2}}\( (X=\tfrac12 n+x)\wedge (Y=\tfrac12 n+y)\)
     = \frac{2+o(1)}{n\sqrt{\pi^2-4}}
       \exp\biggl( -\frac{2\pi(\pi x^2 + \pi y^2 - 4xy)}{(\pi^2-4)n} \biggr).
  \]
\end{thm}
\begin{proof}
Theorem~\ref{twosides}(b) tells us to calculate the probability in 
$\digV_{1/2}$, for which we need the probability in $\digB_p$
when $p\approx\tfrac12$.
For integers $x,y$, define events
\begin{align*}
  E(x,y) &= \bigl\{ (\Svec,\Tvec)
          \mid X=\tfrac12 n + x \wedge Y=\tfrac12 n + y\bigr\}, \text{ and} \\
  E_\Sigma &= \bigl\{ (\Svec,\Tvec) 
          \mid {\textstyle \sum_{i=0}^n S_i = \sum_{j=0}^n T_j} \bigr\}.
\end{align*}
Recall that $\digB_p$ is $\digI_p$ conditioned on event~$E_\Sigma$ so, applying 
Bayes' rule twice,
\begin{equation}\label{bayes}
  \Prob_{\digB_p} (E(x,y)) =
  \Prob_{\digI_p} (E(x,y))
    \, \frac{\Prob_{\digI_p}(E_\Sigma\mid E(x,y))}{\Prob_{\digI_p} (E_\Sigma)}.
\end{equation}
We have already computed $\Prob_{\digI_p} (E_\Sigma)$ in~\eqref{sumsq};
for $p=\tfrac12 + o(1)$ it is
\begin{equation}\label{part3}
  \Prob_{\digI_p} (E_\Sigma) = \frac{1+ o(1)}{n\sqrt\pi}.
\end{equation}
Now consider $\Prob_{\digI_p} (E_\Sigma\mid E(x,y))$.
Under this conditioning, symmetry implies that
$\sum_i S_i$ has the same distribution as the sum of $\tfrac12 n+x$
copies of $Z^-_\delta$ and $\tfrac12 n-x$ copies of $Z^+_\delta$,
all of these being independent.  A similar fact holds for $\sum_j T_j$,
which is in addition independent of $\sum_i S_i$ since we
are operating in~$\digI_p$.
Also recall that the binomial distribution and therefore its truncations and
their convolutions are
log-concave, so we know from~\cite{Bender} that
$\varDelta = \sum_i S_i - \sum_j T_j$, in $\digI_p$ conditioned on $E(x,y)$,
satisfies a local central-limit theorem.
Using Lemma~\ref{truncated}, we calculate
\begin{align*}
  \expect_{\digI_p}(\varDelta\mid E(x,y)) 
       &= (y-x)\,\sqrt{\frac{2n}{\pi}} + O(n^{1-\eps}) \\
  \Var_{\digI_p}(\varDelta\mid E(x,y)) 
      &= \frac{(\pi-2)n^2}{2\pi} + O(n^{2-\eps}),
\end{align*}
and so
\begin{equation}\label{part2}
  \Prob_{\digI_p} (E_\Sigma\mid E(x,y)) = \frac{1+ o(1)}{n\sqrt{\pi-2}}
   \exp\biggl( -\frac{2(x-y)^2}{(\pi-2)n} \biggr).
\end{equation}
Finally, consider $\Prob_{\digI_p} (E(x,y))$.  Since the $2n$ events
$S_i\le\tfrac12 n-1, T_j\le\tfrac12 n-1$ are independent in $\digI_p$,
$X$ and $Y$
are independent Binomial variables $\Bin(n,P(\delta))$.
Using~\eqref{Pdelta} and the normal approximation for the binomial
distribution, we have
\[
  \Prob_{\digI_p} (E(x,y)) =
    \frac{2+o(1)}{\pi n} \exp\biggl(
      -\frac{2(x\sqrt\pi+\delta\sqrt{2n^3}\,)^2}{\pi n}
      -\frac{2(y\sqrt\pi+\delta\sqrt{2n^3}\,)^2}{\pi n}
      \biggr).
\]
Applying this to~\eqref{bayes} together with~\eqref{part3} and~\eqref{part2},
we find that
\begin{align*}
  \Prob_{\digB_p} (E(x,y)) &= \frac{2+o(1)}{n\sqrt{\pi(\pi-2)}}\\
  &{\quad}\times
    \exp\biggl(
      -\frac{2(x\sqrt\pi+\delta\sqrt{2n^3}\,)^2}{\pi n}
      -\frac{2(y\sqrt\pi+\delta\sqrt{2n^3}\,)^2}{\pi n}
      -\frac{2(x-y)^2}{(\pi-2)n} \biggr).
\end{align*}

Now we apply Lemma~\ref{BtoV} to pass the result to $\digV_{1/2}$.
Multiplying by $K_{1/2}(\tfrac12+\delta)$ and integrating, we obtain the
formula in the theorem, which holds for $\digG_{1/2}$ on account
of Theorem~\ref{twosides}(b).
\end{proof}

A corollary of the theorem is that $X-Y$ and $X+Y$ have
asymptotically independent normal distributions, apart from
necessarily having the same parity.

\begin{cor}\label{sumdiff}
  Under the conditions of the theorem, let $\alpha,\beta$ be integers
  of the same parity such that $\alpha,\beta=O(n^{1/2+\eps})$.
  Then
  \[
     \Prob_{\digG_{1/2}}\( (X+Y=n+\alpha) \wedge (X-Y=\beta)\) 
       = \frac{2+o(1)}{n\sqrt{\pi^2-4)}}
    \exp\biggl( -\frac{\pi \alpha^2}{(\pi+2)n} - \frac{\pi \beta^2}{(\pi-2)n}
       \biggr).
  \]
\end{cor}

More complex information could also be obtained, such as the distributions
of all the order statistics of the degrees, but the calculations
would be considerably more intricate. See~\cite{degseq2} for similar
calculations for ordinary graphs.

\nicebreak
\section{Properties of likely degree sequences}\label{s:properties}

To prove our theorems,
our first task will be to investigate the bulk behaviour of
our various probability spaces in order to identify some
behaviour that has probability $\oo(1)$.
We will apply a few concentration inequalities, which we now give.

\begin{thm}[{\cite[Lemma 1.2]{mcdiarmid}}]\label{concentration}
 Let $\Xvec = (X_1,X_2,\ldots,X_N)$ be a family of independent random
 variables, with $X_i$ taking values in a set $A_i$ for each~$i$.
 Suppose that for each $j$ the function $f:\prod_{i=1}^N A_i\to\Reals$ satisfies
 $\abs{f(\xvec)-f(\xvec')}\le c_j$ whenever $\xvec,\xvec'\in\prod_{i=1}^N A_i$
 differ only in the $j$-th component.  Then, for any $z$,
 \[ \textstyle
   \Prob\(\Abs{f(\Xvec)-\expect(f(\Xvec))} \ge z\)
     \le 2\exp\(-2z^2/\sum_{i=1}^N c_i^2\).
 \]
\end{thm}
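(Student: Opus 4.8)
The plan is to prove this by the standard martingale (Azuma--Hoeffding) route, reducing the two-sided tail bound to control of a moment generating function. First I would build the Doob martingale obtained by revealing the coordinates of $\Xvec$ one at a time: set $Z_0 = \expect(f(\Xvec))$ and, for $1\le k\le N$,
\[
  Z_k = \expect\bigl( f(\Xvec) \mid X_1,\ldots,X_k \bigr),
\]
so that $Z_N = f(\Xvec)$ and $(Z_k)_{k=0}^N$ is a martingale with respect to the filtration generated by $X_1,\ldots,X_k$. The increments $D_k = Z_k - Z_{k-1}$ then satisfy $\expect(D_k \mid X_1,\ldots,X_{k-1})=0$, and $f(\Xvec)-\expect(f(\Xvec)) = \sum_{k=1}^N D_k$.

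The crux of the argument, and the step I expect to be the main obstacle, is to show that, conditioned on $X_1,\ldots,X_{k-1}$, each increment $D_k$ is supported in an interval of width at most $c_k$. Here I would exploit independence. Because $X_k$ is independent of $X_{k+1},\ldots,X_N$, conditioning additionally on the value $X_k=x$ affects $Z_k$ only through the $k$th argument of $f$, with the later coordinates still integrated against their unchanged product law. Writing $g(x) = \expect\bigl( f(\Xvec) \mid X_1,\ldots,X_{k-1},\, X_k=x \bigr)$, the bounded-difference hypothesis $\abs{f(\xvec)-f(\xvec')}\le c_k$ passes through this expectation over the remaining coordinates to give $\abs{g(x)-g(x')}\le c_k$ for all $x,x'$. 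Since $Z_{k-1}=\expect(g(X_k)\mid X_1,\ldots,X_{k-1})$ by the tower property, $D_k = g(X_k)-Z_{k-1}$ is a conditionally mean-zero variable whose conditional range is at most $c_k$. Making the measure-theoretic bookkeeping of these conditional expectations fully rigorous for arbitrary value sets $A_i$ is the fussiest part.

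With the range bound in hand I would apply Hoeffding's lemma conditionally: a mean-zero variable supported in an interval of length $c_k$ has conditional moment generating function at most $\exp(t^2c_k^2/8)$. Iterating the tower property across $k=N,N-1,\ldots,1$ then yields
\[
  \expect\bigl( e^{t(f(\Xvec)-\expect f(\Xvec))} \bigr)
     \le \exp\Bigl( \tfrac18 t^2 \textstyle\sum_{k=1}^N c_k^2 \Bigr)
\]
for every $t>0$. A Chernoff bound $\Prob(f-\expect f\ge z)\le e^{-tz}\,\expect(e^{t(f-\expect f)})$, optimized at $t = 4z/\sum_k c_k^2$, then gives the upper tail $\exp\bigl(-2z^2/\sum_k c_k^2\bigr)$; applying the same argument to $-f$ controls the lower tail, and a union bound over the two tails produces the factor of $2$ in the stated inequality.
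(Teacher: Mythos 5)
Your proof is correct: the paper itself gives no proof of this theorem, quoting it directly from McDiarmid's survey \cite{mcdiarmid}, and your Doob-martingale argument --- bounded increments via independence of the unrevealed coordinates, conditional Hoeffding lemma giving the factor $\exp(t^2c_k^2/8)$, Chernoff optimization at $t=4z/\sum_{i}c_i^2$, and a union bound for the two tails --- is precisely the standard proof found in that reference, with all constants checking out. No gaps beyond the measure-theoretic bookkeeping you already flag, which is routine since each $A_i$ may be taken with its discrete or given $\sigma$-algebra and the conditional expectations realized as integrals against the product law of the unrevealed coordinates.
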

\begin{cor}\label{concentrationcor}
 Let $\Xvec = (X_1,X_2,\ldots,X_N)$ be a family of independent real 
 random
 variables such that $\abs{X_i-\expect(X_i)} \le c_i$ for each~$i$.
 Define $X = \sum_{i=1}^N X_i$.  Then, for any $z$,
 \[ \textstyle
    \Prob\(\abs{X-\expect(X)} \ge z\)
     \le 2\exp\(-\tfrac12 z^2/\sum_{i=1}^N c_i^2\,\)
 \]
\end{cor}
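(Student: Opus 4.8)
The plan is to deduce the corollary directly from Theorem~\ref{concentration} by specializing the function $f$ to a sum. First I would take $f:\prod_{i=1}^N A_i\to\Reals$ to be $f(\xvec)=\sum_{i=1}^N x_i$, where each $A_i$ is chosen to contain the support of $X_i$. Since $\abs{X_i-\expect(X_i)}\le c_i$ almost surely, the support of $X_i$ lies in the interval $[\expect(X_i)-c_i,\,\expect(X_i)+c_i]$, so I may take $A_i$ to be this interval of length $2c_i$. With this choice $f(\Xvec)=X$ and $\expect(f(\Xvec))=\expect(X)$, so the left-hand side of the theorem is exactly the quantity we wish to bound.

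The crucial step is to determine the bounded-difference constant for $f$ in each coordinate. If $\xvec,\xvec'\in\prod_i A_i$ differ only in the $j$-th component, then $\abs{f(\xvec)-f(\xvec')}=\abs{x_j-x_j'}$; since both $x_j$ and $x_j'$ lie in the interval $A_j$ of length $2c_j$, this difference is at most $2c_j$. Thus the hypothesis of Theorem~\ref{concentration} is satisfied with its constant $c_j$ replaced by $2c_j$.

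Substituting into the conclusion of Theorem~\ref{concentration} then gives
\[
  \Prob\(\abs{X-\expect(X)}\ge z\)\le 2\exp\(-\frac{2z^2}{\sum_{i=1}^N (2c_i)^2}\)
    =2\exp\(-\frac{z^2}{2\sum_{i=1}^N c_i^2}\),
\]
which is the claimed inequality. The only point requiring care --- and the sole place the argument could go astray --- is the factor of two: the per-coordinate variation of a variable confined to $[\expect(X_j)-c_j,\,\expect(X_j)+c_j]$ is its full diameter $2c_j$ rather than $c_j$, and it is precisely the resulting factor $4$ in the denominator, combined with the factor $2$ in the exponent of Theorem~\ref{concentration}, that produces the constant $\tfrac12$ appearing in the corollary.
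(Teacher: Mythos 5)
Your proposal is correct and is exactly the intended derivation: the paper states Corollary~\ref{concentrationcor} without proof, and the natural specialization of Theorem~\ref{concentration} to $f(\xvec)=\sum_{i=1}^N x_i$ with per-coordinate Lipschitz constants $2c_i$ (the full diameter of the interval $[\expect(X_j)-c_j,\,\expect(X_j)+c_j]$) gives precisely the stated bound. Your explicit accounting of the factor of two, which turns the $2$ in the exponent of Theorem~\ref{concentration} into the $\tfrac12$ of the corollary, is the one point where care is needed, and you handled it correctly.
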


Another consequence of Theorem~\ref{concentration} is the following.

\begin{thm}\label{setfunction}
  Let $A_1,\ldots,A_N$ be finite sets, and let $a_1,\ldots,a_N$ be integers
  such that $0\le a_i\le\card{A_i}$ for each~$i$.
  Let $\binom{A_i}{a_i}$ denote the uniform probability space of
  $a_i$-element subsets of~$A_i$.
  Suppose that for each $j$ the function
  $f:\prod_{i=1}^N \binom{A_i}{a_i}\to\Reals$ satisfies
  $\abs{f(\xvec)-f(\xvec')}\le c_j$ whenever
  $\xvec,\xvec'\in\prod_{i=1}^N \binom{A_i}{a_i}$ are the same except that their
  $j$-th components $x_j,x'_j$ have $\card{x_j\cap x'_j}=a_j-1$
  (i.e., the $a_j$-element subsets $x_j,x'_j$ are minimally different).
  If $\Xvec=(X_1,\ldots,X_N)$ is a family of independent set-valued
  random variables with distributions 
  $\binom{A_1}{a_1},\ldots,\binom{A_N}{a_N}$, then for any $z$,
  \[ 
   \Prob\(\Abs{f(\Xvec)-\expect(f(\Xvec))} \ge z\)
     \le 2\exp\biggl(\frac{-2z^2}
           {\sum_{i=1}^N c_i^2\min\{a_i,\card{A_i}-a_i\}}\biggr).
 \]
\end{thm}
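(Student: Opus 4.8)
The plan is to deduce this from McDiarmid's inequality (Theorem~\ref{concentration}) by encoding each uniformly random $a_i$-subset as a deterministic function of independent coordinates, chosen so that altering one coordinate perturbs the subset $X_i$ by at most a single swap. The hypothesis on $f$ then bounds the resulting change by $c_i$, and the point of the construction is to use only $\min\{a_i,\card{A_i}-a_i\}$ coordinates per component, which is exactly where that factor in the denominator will come from. The naive choice of letting the McDiarmid coordinates be the subsets themselves fails badly (changing one subset arbitrarily can cost the diameter of the Johnson graph times $c_i$), so a finer encoding is essential.

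Concretely, I would generate $X_i$ by a truncated Fisher--Yates shuffle. Writing $n_i=\card{A_i}$ and fixing a linear order on $A_i$, maintain an array initialised to the sorted list; for $k=1,\ldots,a_i$ draw $J_{i,k}$ uniformly from $\{k,\ldots,n_i\}$ and swap the entries in positions $k$ and $J_{i,k}$; then let $X_i$ be the set of values in the first $a_i$ positions. Later steps never touch the first $a_i$ positions, so this reproduces the uniform distribution on $\binom{A_i}{a_i}$, and the variables $\{J_{i,k}\}$ are mutually independent across both indices. Thus $f$ becomes a function $g$ of the independent family $\boldital{J}=(J_{i,k})$ whose induced distribution on $\prod_i\binom{A_i}{a_i}$ equals that of $\Xvec$.

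The key lemma, and the step I expect to require the most care, is that changing a single $J_{i,k}$ changes $X_i$ by symmetric difference at most $2$ (and leaves the other components fixed). I would prove this by running the two processes that agree except in $J_{i,k}$: positions $1,\ldots,k-1$ are already frozen and identical, the step-$k$ transposition differs by $(k\,j)$ versus $(k\,j')$, and every later swap acts on the same position indices in both runs. Hence the two final arrays, viewed as permutations $F,F'$ of $A_i$, satisfy $F'=F\circ\theta$, where $\theta$ is a conjugate of the product $(k\,j)(k\,j')$ and so has support of size at most $3$. Since $X_i=F(\{1,\ldots,a_i\})$ and $X_i'=F(\theta(\{1,\ldots,a_i\}))$ with $F$ bijective, we get $\card{X_i\triangle X_i'}=\card{\{1,\ldots,a_i\}\triangle\theta(\{1,\ldots,a_i\})}$; as $\theta$ moves at most three positions while preserving the size $a_i$, this symmetric difference is even and at most $3$, hence at most $2$. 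So $X_i,X_i'$ are equal or minimally different, and the hypothesis gives $\abs{g(\boldital{J})-g(\boldital{J}')}\le c_i$.

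With the single-swap property established the rest is routine. To replace $a_i$ by $\min\{a_i,\card{A_i}-a_i\}$, I would, whenever $a_i>\card{A_i}-a_i$, run the same construction on the complementary $(\card{A_i}-a_i)$-subset and take $X_i$ to be its complement; since complementation in $A_i$ preserves symmetric differences, a single swap of the complement is a single swap of $X_i$, so the same bound $c_i$ still applies, now using only $\min\{a_i,\card{A_i}-a_i\}$ coordinates for component~$i$. Applying Theorem~\ref{concentration} to $g$ with per-coordinate bound $c_i$ on each of the $\min\{a_i,\card{A_i}-a_i\}$ coordinates of component~$i$ then contributes $\sum_{(i,k)}c_i^2=\sum_{i=1}^N c_i^2\min\{a_i,\card{A_i}-a_i\}$ to the exponent, which is exactly the asserted inequality.
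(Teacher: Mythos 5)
Your proof is correct and follows essentially the same strategy as the paper's: encode each uniformly random subset as a deterministic function of independent coordinates with single-swap sensitivity, apply Theorem~\ref{concentration} to that encoding, and use complementation to obtain the factor $\min\{a_i,\card{A_i}-a_i\}$. The only difference is the choice of sampler --- the paper uses reservoir sampling, which natively uses $\card{A_i}-a_i$ coordinates per component (complementing when $a_i<\card{A_i}/2$), while your truncated Fisher--Yates shuffle natively uses $a_i$ coordinates (complementing in the opposite regime) --- and your conjugation argument $F'=F\circ\theta$ with $\card{\mathrm{supp}(\theta)}\le 3$ is a valid substitute for the paper's direct check that changing one reservoir variable replaces at most one element.
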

\begin{proof}
  We start by reminding the reader of a classical algorithm called
  ``reservoir sampling'', attributed by Knuth to Alan G. Waterman~\cite[p.\,144]{Knuth}.
  Let $Y^{(i)}_{a_i+1},\ldots,Y^{(i)}_{\card{A_i}}$ be independent random
  variables, where $Y^{(i)}_{j}$ has the discrete uniform distribution
  on $\{1,2,\ldots,j\}$.  Now suppose $A_i=\{w_1,\ldots,w_{\card{A_i}}\}$.
  Execute the following algorithm:
  \begin{tabbing} 
      \quad\=For $j=1,\ldots,a_i$ set $x_j := w_j$\,;\\
      \>For $j=a_i+1,\ldots,\card{A_i}$,
        if $Y^{(i)}_{j} \le a_i$ then set $x_{Y^{(i)}_{j}} := w_j$\,.
  \end{tabbing}
  Define $X_i=X_i(Y^{(i)}_{a_i+1},\ldots,Y^{(i)}_{\card{A_i}})$ to be the
  value of $\{x_1,\ldots,x_{a_i}\}$ when the algorithm finishes.
  The \textit{raison d'\^ etre\/} of the algorithm, which is easy to check,
  is that $X_i$ has distribution $\binom{A_i}{a_i}$; i.e., it is uniform.
  It is also easy to check that the maximum change to $X_i$ resulting
  from a change in a single $Y^{(i)}_{j}$ is that one element is replaced by
  another.
  
  Therefore, we can apply Theorem~\ref{concentration} if we consider
  $f(\Xvec)$ as a function of all the independent variables $\{Y^{(i)}_j\}$.
  If $a_i<\card{A_i}/2$, we can represent $X_i$ by its
  complement; this justifies the term $\min\{a_i,\card{A_i}-a_i\}$ in
  the theorem statement.
\end{proof}

We next apply these concentration inequalities to show that certain events
are very likely in our probability spaces.

\begin{thm}\label{regularity}
  The following are true for sufficiently small $\eps>0$.
  \begin{itemize}
  \item[(a)] Suppose that $(m,n,p)$ and $(m,n,k/mn)$ are $(a,\eps)$-acceptable.
  Then
  \[
     \Prob_\D\( (\Svec,\Tvec)\text{ is $\eps$-regular} \)
     = 1 - \oo(1)
  \]
  for $\D$ being any of $\G_p$, $\G_k$, $\I_p$, $\B_p$, $\B_k$, or $\V_p$.
  The same is true for $m=n$ when $\D$ is any of
  $\digG_p$, $\digG_k$, $\digI_p$, $\digB_p$, $\digB_k$, or $\digV_p$.
  \item[(b)] If\/ $\tvec\in I_m^n$ is $\eps$-regular, and
  $(m,n,\lambda_m(\tvec))$ is $(a,\eps)$-acceptable, then
  \[
     \Prob_\D\( \Svec\text{ is $\eps$-regular} \)
     = 1 - \oo(1).
  \]
  for $\D$ being $\G_\tvec$ or $\B_\tvec$.
  The same is true for $m=n$ when $\D$ is either of
  $\digG_\tvec$ or $\digB_\tvec$.
 \end{itemize}
\end{thm}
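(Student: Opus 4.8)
The plan is to prove the result first for the models that possess an explicit product structure, in which each individual degree is a sum of independent, uniformly bounded variables, by direct concentration; and then to transfer to the conditioned and mixed models by elementary manipulations. Throughout, let $R$ denote the event that $\Svec$ and $\Tvec$ are each $\eps$-regular (in part~(b), that $\Svec$ is $\eps$-regular), and write $E$ for its complement. In every graph model the identity $\sum_i S_i=\sum_j T_j$ holds with probability~$1$, so there $R$ is exactly the full pair-regularity event; for $\I_p$, where the two sums need not agree, $R$ (both marginals $\eps$-regular) is precisely what is being asserted, and it is this componentwise statement that feeds the transfer arguments below.

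\emph{Base models.} In $\I_p$ the degrees are independent with $S_i\sim\Bin(n,p)$ and $T_j\sim\Bin(m,p)$; in $\G_p$ the row sets $\{E_{ij}\}_j$ are disjoint across $i$, so $S_1,\dots,S_m$ are again independent $\Bin(n,p)$ (and symmetrically for $\Tvec$); and in $\G_\tvec$ one generates an independent uniform $t_j$-subset $C_j\subseteq[m]$ for each column, whence $S_i=\sum_{j=1}^n\mathbf 1[i\in C_j]$ is a sum of $n$ independent Bernoulli variables. In each case all means $\expect S_i$ coincide (with common value $np$, resp.\ $\lambda n$), so if $\abs{S_i-\expect S_i}\le\tfrac12 m^{1/2+\eps}$ for every $i$ then $\abs{S_i-\bar S}\le m^{1/2+\eps}$, where $\bar S=\tfrac1m\sum_i S_i$; it therefore suffices to bound a single deviation. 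I would apply Corollary~\ref{concentrationcor} with all $c_i=1$ and $z=\tfrac12 m^{1/2+\eps}$, obtaining the tail $2\exp(-z^2/(2n))$ with $z^2/n\asymp m^{1+2\eps}/n$. This diverges polynomially because $n=o(m^{1+\eps})$, so each failure probability is $\oo(1)$, and the union over the $m$ rows (a polynomial factor) stays $\oo(1)$; the treatment of $\Tvec$ is identical and uses $m=o(n^{1+\eps})$. The digraph analogues $\digI_p,\digG_p,\digG_\tvec$ are the same with $\Bin(n{-}1,p)$ and $n{-}1$ summands.

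\emph{Transfer by conditioning and mixing.} The remaining distributions arise from the base models by conditioning on a sum constraint or by averaging, and for any base distribution $\D_0$ conditioned on an event $C$ to produce $\D$ one has $\Prob_{\D}(E)\le\Prob_{\D_0}(E)/\Prob_{\D_0}(C)$. For $\B_p=\I_p\mid\{\sum_iS_i=\sum_jT_j\}$ the denominator is $\Theta((pqmn)^{-1/2})$ by~\eqref{sumsq} (see Lemma~\ref{Bpdenom}); for $\B_k=\I_p\mid\{\sum_iS_i=\sum_jT_j=k\}$ and for $\G_k=\G_p\mid\{K=k\}$, run with $p=k/mn$ which is $(a,\eps)$-acceptable by hypothesis, the denominator is a central binomial (or product of two) probability, $\Theta((pqmn)^{-1/2})$ or $\Theta((pqmn)^{-1})$. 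In every case the reciprocal is at most a polynomial in $n$, since $pqmn=O(n^{2+\eps})$, and $\oo(1)$ is closed under multiplication by polynomials, so $\Prob_{\D}(E)=\oo(1)$. The model $\B_\tvec$ is the $\Svec$-marginal of $\B_k$ with $k=\sum_j t_j$, so its regularity follows from the $\B_k$ case and needs only $\lambda_m(\tvec)=k/mn$ acceptable, not regularity of $\tvec$. Finally $\V_p=V(p)^{-1}\int_0^1 K_p(p')\B_{p'}\,dp'$: by Lemma~\ref{Vpest} $V(p)\to1$, the bound $\Prob_{\B_{p'}}(E)=\oo(1)$ holds uniformly for $p'$ in the shrinking bulk where $K_p$ is non-negligible (the base concentration is in fact $p'$-free and the conditioning denominator stays a polynomial reciprocal there), and the normal tail of $K_p$ outside the bulk contributes $\oo(1)$; hence $\Prob_{\V_p}(E)=\oo(1)$. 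The digraph versions use the second identity of Lemma~\ref{Bpdenom} in place of~\eqref{sumsq}.

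\emph{Main obstacle.} The crux is a scale-matching verification rather than any single hard estimate: each degree concentrates on the scale $\sqrt n$ (or $\sqrt m$), whereas the regularity window is $N^{1/2+\eps}$ with $N=m$ (or $n$), so the Gaussian exponent is of order $m^{1+2\eps}/n$, and this diverges polynomially exactly because $m$ and $n$ are polynomially comparable ($n=o(m^{1+\eps})$ and $m=o(n^{1+\eps})$) --- this is precisely where the acceptability hypotheses enter the concentration step. A secondary pitfall is the $k$-models: one must \emph{not} apply Theorem~\ref{setfunction} directly to a single degree, because swapping edges inside a uniform $k$-subset charges $\min\{k,mn-k\}\asymp pqmn$ to the denominator and so loses a factor of order $m$, leaving the exponent bounded. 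The correct route is to condition $\G_p$ (resp.\ $\I_p$) on the sum constraint, as above, and to check that the conditioning costs only a polynomial factor that $\oo(1)$ absorbs.
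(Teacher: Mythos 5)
Your proposal is correct and follows essentially the same route as the paper's proof: direct bounded-difference concentration of the individual degrees in the product models $\I_p$, $\G_p$, $\G_\tvec$ plus a union bound, transfer to $\G_k$, $\B_p$, $\B_k$ by viewing them as slices of only polynomially small probability (so that $\oo(1)$ absorbs the reciprocal), identification of $\B_\tvec$ with the $\Svec$-marginal of $\B_k$, and a bulk-plus-tail split of the $\V_p$ mixture integral using the normal tails of $K_p$. The only (harmless) difference is bookkeeping: you compare each $S_i$ to the common mean with window $\tfrac12 m^{1/2+\eps}$ directly, where the paper separately controls $\abs{S_i-pn}$ and the deviation of $\varLambda$ from $p$.
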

\begin{proof}
  By symmetry, we need only show that $\Svec$ is almost always
  $\eps$-regular.

  In the case that $\D$ is $\G_p$ or $\I_p$, each $S_i$ has the binomial
  distribution $\Bin(n,p)$, and $K$ has the distribution
  $\Bin(mn,p)$. Therefore, by Corollary~\ref{concentrationcor},
  \begin{align}
     \Prob_\D\( \abs{S_i-p n} \ge n^{1/2+\eps/2} \) 
        &= \oo(1),\quad i=1,\ldots,m, \notag \\
     \Prob_\D\( \abs{\varLambda -p}
             \ge n^{-1+2\eps} \) &= \oo(1), \label{Scon}
  \end{align}
  from which it follows that
  \[
     \Prob_\D\( \Svec\text{ is $\eps$-regular} \) = 1 - \oo(1).
  \]
  The cases that $\D$ is $\G_k$, $\B_p$, or $\B_k$ follow, since these
  are the same as slices of $\G_p$ or $\I_p$ of size $n^{-O(1)}$,
  using $p=k/mn$.
  Also, the distribution of $\Svec$ in $\B_\tvec$ is the same as in $\B_k$
  for $k=\sum_{j=1}^n t_j$, so that case follows too.
  
  For $\D=\G_\tvec$, note that each $S_i$ is the sum of independent
  variables $X_1,\ldots,X_n$, where $X_j$ is a Bernoulli random variable
  with mean $t_j/m$.  The theorem thus follows using the same argument
  as we used for $\G_p$.
  
  Finally consider $\D=\V_p$.  Taking $X$ to be the indicator of the
  event that $\Svec$ is not $\eps$-regular,
  Lemmas~\ref{Vpest}--\ref{BtoV} give
  \begin{align*}
     \expect_{\V_p}(X) &= O(1)\int_0^1 K_p(p') \expect_{\B_{p'}}(X)\,dp' \\
        &= O(1) \biggl( \int_0^{p-n^{-1+\eps}}\kern-1em
               + \int_{p-n^{-1+\eps}}^{p+n^{-1+\eps}}\kern-0.3em
               + \int_{p+n^{-1+\eps}}^1\biggr) \,K_p(p') \expect_{\B_{p'}}(X)\,dp'.
  \end{align*}
  The first and third integrals are $\oo(1)$ since the tails of $K_p(p')$
  are small (recall that it is a normal density with mean $p$ and
  variance $O((mn)^{-1})$), while the second integral
  is $\oo(1)$ by the present theorem in the case~$\D=\B_{p'}$.
  (Note that if $(m,n,p)$ is $(a,\eps)$-acceptable, then all 
  $p'\in [p-n^{-1+\eps},p+n^{-1+\eps}]$ are $(a',\eps)$ for slightly
  different~$a'$.)
  
  For the digraph models, the proofs are essentially the same.
\end{proof}

\nicebreak

The following concentration results will form a key part of the proof of Theorem \ref{twosides}.
\begin{thm}\label{wriggly}
  The following are true for sufficiently small $\eps>0$.
  \begin{itemize}
  \item[(a)] Suppose that $(m,n,p)$ and $(m,n,k/mn)$ are $(a,\eps)$-acceptable.
  Then
	  \begin{align}
	     \Prob_\D\Bigl(\, \sum_{i=1}^m (S_i-n\varLambda)^2 
	        = \(1+O(n^{-1/2+2\eps})\) \varLambda(1-\varLambda)mn\Bigr)
	     &= 1 - \oo(1), \label{Sflat}\\
	     \Prob_\D\Bigl( \,\sum_{j=1}^n (T_j-m\varLambda)^2 
	        = \(1+O(m^{-1/2+2\eps})\) \varLambda(1-\varLambda)mn\Bigr)
	     &= 1 - \oo(1),\label{Tflat}
	  \end{align}
	  when $\D$ is $\G_p$ or $\G_k$.  When $m=n$, the same bounds
	  hold when $\D$ is $\digG_p$ or $\digG_k$.
  \item[(b)] If\/ $\tvec\in I_m^n$ is $\eps$-regular, and
	  $(m,n,\lambda_m(\tvec))$ is $(a,\eps)$-acceptable, then
	  \eqref{Sflat} holds when $\D$ is $\G_\tvec$, and when $m=n$
	  and $\D$ is $\digG_\tvec$.	  
  \item[(c)] If $m=n$, $(n,n,p)$ and $(n,n,k/n^2)$ are $(a,\eps)$-acceptable, then
          \begin{align} 
            \Prob_\D\Bigl( \,\sum_{i=1}^n (S_i-n\varLambda)(T_i-n\varLambda)
                 = O(n^{-1/2+2\eps})  \varLambda(1-\varLambda)n^2 \Bigr)
                 = 1 - \oo(1) \label{STflat}
          \end{align}
          when $\D$ is $\digG_p$ or $\digG_k$.
  \item[(d)] If $m=n$, $(n,n,\lambda_n(\tvec))$ is $(a,\eps)$-acceptable
          and $\tvec\in I_n^n$ is $\eps$-regular, then \eqref{STflat} holds
          when $\Tvec=\tvec$ and $\D$ is $\digG_\tvec$.
  \end{itemize}
\end{thm}
\begin{proof}
 Write $R=\sum_{i=1}^m (S_i-n\varLambda)^2$.  For $i=1,\ldots,m$
 and $j=1,\ldots,n$, let $X_{ij}$ be the indicator for an edge
 from $u_i$ to $v_j$.
 {Define $\varDelta_{ii'jj'} = (X_{ij}-X_{i'j})(X_{ij'}-X_{i'j'})$.  Then we
 have 
 \begin{align}
    \frac{1}{2m}\sum_{i,i'=1}^m\;\sum_{j,j'=1}^n \varDelta_{ii'jj'}
      &= \frac{1}{m} \sum_{i,i'=1}^m\;\sum_{j,j'=1}^n X_{ij}X_{ij'} 
                   -  \frac{1}{m} \sum_{i,i'=1}^m\;\sum_{j,j'=1}^n X_{ij}X_{i'j'} \notag \\
      &= \sum_{i=1}^m S_i^2 - \frac{1}{m}\Bigl( \sum_{i=1}^m S_i\Bigr)^2
         = R. \label{Rsum}
 \end{align}}
 When $\D$ is either $\G_p$ or $\G_\tvec$, $X_{ij}$ is independent
 of $X_{i'j'}$ if $j\ne j'$, and $\expect_\D(X_{ij})$ is independent 
 of~$i$.  This shows that $\expect_\D(\varDelta_{ii'jj'})=0$ for $j\ne j'$,
 leaving us with
 \[
     \expect_\D(R) = \frac{1}{2m} \sum_{i,i'=1}^m\sum_{j=1}^n
         \Prob_D(X_{ij}\ne X_{i'j}).
 \]
 This gives
 \begin{align*}
     \expect_{\G_p}(R) &= pq(m-1)n, \\
     \expect_{\G_\tvec}(R) &= \frac{1}{m} \sum_{j=1}^n t_j(m-t_j).
 \end{align*}
 Now define $R^*=\sum_{i=1}^m \min\{(S_i-n\varLambda)^2,m^{1+2\eps}\}$.
 If $\Svec$ is $\eps$-regular and $S_j$ is changed by~1 for some~$j$, which changes $\varLambda$
 by $1/mn$, then
 $\min\{(S_i-n\varLambda)^2,m^{1+2\eps}\}$ changes by 
 $O(m^{1/2+\eps})$ for $i=j$ and by $O(m^{-1/2+\eps})$ for $i\ne j$.
 Consequently, $R^*$ changes by $O(m^{1/2+\eps})$.
 Applying Theorem~\ref{concentration}, we find that
  \[
   \Prob_\D\( \abs{R^*-\expect_\D(R^*)} 
           \ge \tfrac12 m^{1+\eps}n^{1/2+\eps/2} \) = \oo(1)
 \]
for $\D=\G_p$.  It also holds for $\D=\G_\tvec$, using
Theorem~\ref{setfunction} in the same way.

Now~Theorem~\ref{regularity} shows that 
 $\Prob_\D(R\ne R^*)=\oo(1)$, which implies that
 $\expect_D(R^*)=\expect_D(R)+\oo(1)$.  Therefore we can argue
 \begin{align*}
    \Prob_\D\( \abs{R-&\expect_\D(R)} \ge m^{1+\eps}n^{1/2+\eps/2} \) \\
       &\le
    \Prob_\D(R\ne R^*) 
       + \Prob_\D\( \abs{R^*-\expect_\D(R)} \ge m^{1+\eps}n^{1/2+\eps/2} \) \\
   &\le \oo(1) + \Prob_\D\( \abs{R^*-\expect_\D(R^*)}
           \ge m^{1+\eps}n^{1/2+\eps/2}+\oo(1)\)\\
   &=\oo(1).
 \end{align*}
  We also have that $\varLambda$ is fixed at the value
 $\lambda_m(\tvec)=(mn)^{-1}\sum_{j=1}^n t_j$ in $\G_\tvec$ and that
 \[
     \Prob_{\G_p} \(\abs{\varLambda-p} \ge n^{-1+2\eps}\) = \oo(1),
 \]
 by~\eqref{Scon}.
 {}From these bounds, inequality~\eqref{Sflat} follows for $\G_p$
 and~$\G_\tvec$, and~\eqref{Tflat} follows for $\G_p$ by symmetry.
 By choosing $p=k/mn$ and noting that $\G_k$ is a slice of
 size $n^{-O(1)}$ of $\G_p$, the theorem is proved for~$\G_k$ too.
 
 For $\D=\digG_p, \digG_k, \digG_\tvec$, the proofs of~\eqref{Sflat}
 and~\eqref{Tflat} follow the same pattern.
 Since~\eqref{Rsum} still holds, we can note that
 $\expect_{\digG_p}(\varDelta_{ii'jj'})=\expect_{\G_p}(\varDelta_{ii'jj'})$
 and $\expect_{\digG_\tvec}(\varDelta_{ii'jj'})
 =\expect_{\G_\tvec}(\varDelta_{ii'jj'})$
 unless $\{j,j'\}\subseteq\{i,i'\}$, to infer that
 $\expect_{\digG_p}(R)=\expect_{\G_p}(R) + O(n)$
 and $\expect_{\digG_\tvec}(R)=\expect_{\G_\tvec}(R) + O(n)$.
 This is enough to ensure that the rest of the proof continues in the same way.
 (For the record, $\expect_{\digG_p}(R) = pq(n-1)^2$.)
 
 We now prove part (d); take $\D=\digG_\tvec$, with
 $\tvec$ being $\eps$-regular and $(n,n,\lambda_n(\tvec))$
 being $(a,\eps)$-acceptable.
 We have 
  \[
   \expect_{\digG_\tvec}(S_i) = \sum_{j\ne i} \frac{t_j}{n-1}
       = \frac{\lambda n^2}{n-1} - \frac{t_i}{n-1},
 \]
 from which it follows that
 \[
     \expect_{\digG_\tvec}\Bigl(\,
      \sum_{i=1}^n (S_i-\lambda n)(t_i-\lambda n) \Bigr) 
       = - \frac{\sum_{j=1}^n (t_j-\lambda n)^2}{n-1}
       = O(n^{1+2\eps}).
 \]
In the notation of Theorem~\ref{setfunction} set $A_j=[n]\setminus \{ j \}$ and $a_j=t_j$ for each $j \in [n]$. Then in the probability space $\Xvec$, $X_j$ is the set of indices of vertices incident with $v_j$ in $\digG_\tvec$. Note $S_i=|\{ j : u_i \in X_j \}|$ and two sets being minimally different in the $j$-th component corresponds to two graphs in which one of the $t_j$ edges incident with vertex $v_j$ is incident with different vertices in $U$. This means, as $\tvec$ is $\eps$-regular, $c_j = O(n^{1/2+\eps})$ for each $j$ and we can apply Theorem~\ref{setfunction} to conclude that (d) holds.

 In the case of $\D=\digG_p$, Theorem~\ref{regularity} says that
 $\Tvec$ is $\eps$-regular with probability $1-\oo(1)$, so
 (c) is true for $\digG_p$.  Finally, $\digG_k$ is a substantial
 slice of $\digG_p$ if $p=k/n^2$, so (c) holds for 
 $\digG_k$ too.
\end{proof}

\nicebreak
\section{Proofs of the main theorems}\label{s:proofs}

In this section we will give the proofs of
the theorems and corollary stated in Section~\ref{s:main}.
The bases for our analysis are the following enumerative results
of Canfield, Greenhill and McKay~\cite{CGM,GMX}.
Also see Barvinok and Hartigan~\cite{BH} for an overlapping result.

\begin{thm}[{\cite{CGM,GMX}}]\label{enumeration}
 Let $a,b>0$ be constants such that $a+b<\frac12$.  Then there is
 a constant $\eps_0=\eps_0(a,b)>0$ such that the following is true for any 
 fixed $\eps$ with $0<\eps\le\eps_0$.
 If $(\svec,\tvec)$ is $\eps$-regular, then
 \begin{align*}
   G(\svec,\tvec) &= 
    \binom{mn}{\lambda mn}^{\!\!-1}\sprod\tprod \\
    &{}\times\exp\biggl( -\dfrac{1}{2}
       \biggl( 1 - \frac{\sum_{i=1}^m \,(s_i-\lambda n)^2}{\lambda(1-\lambda)mn}\biggr)
       \biggl( 1 - \frac{\sum_{j=1}^n \,(t_j-\lambda m)^2}{\lambda(1-\lambda)mn}\biggr)
       + O(n^{-b}) \biggr).
 \end{align*}
 Moreover, if $m=n$, then
  \begin{align*}
    \dig G(\svec,\tvec) &= 
     \binom{n^2-n}{\lambda n^2}^{\!\!-1}\dsprod\dtprod \\
     &{\kern10mm}\times\exp\biggl( -\dfrac{1}{2}
        \biggl( 1 - \frac{\sum_{i=1}^n \,(s_i-\lambda n)^2}{\lambda(1-\lambda)n^2}\biggr)
        \biggl( 1 - \frac{\sum_{j=1}^n \,(t_j-\lambda n)^2}{\lambda(1-\lambda)n^2}\biggr) \\
      &{\kern 12em} 
        - \frac{\sum_{i=1}^n \,(s_i-\lambda n)(t_i-\lambda n)}{\lambda(1-\lambda)n^2}
        + O(n^{-b}) \biggr).
  \end{align*}
\end{thm} 

We first consider $\G_p$.   Suppose that $a,b>0$ are constants
with $a+b<\frac 12$, and that
$(m,n,p)$ is $(a,\eps)$-acceptable.
According to Theorems~\ref{enumeration}, \ref{regularity} and \ref{wriggly},
and~\eqref{Scon}, there is an event $B\subseteq I_{m,n}$ such that
$\Prob_{\G_p}(B) = \oo(1)$ and, for $(\svec,\tvec)\notin B$,
\begin{align}
     \abs{K-pmn} &\le mn^{2\eps},\label{Mcons}\displaybreak[0]\\
     \Prob_{\G_p}(\stevent) 
        &= p^k q^{mn-k}  \exp\( O(n^{-b}) \)
        \binom{mn}{k}^{\!\!-1}\sprod\tprod, \notag\displaybreak[0]\\
        &= p^{2k}q^{2mn-2k}\sqrt{2\pi pqmn} \,\sprod\tprod \notag\\
        &{\kern 8em}\times\exp\biggl( \frac{(k-pmn)^2}{2pqmn} + O(n^{-b})\biggr)
               \label{Gpusual}
\end{align}
for $\sum_i s_i = \sum_j t_j = k$,
where the last step follows by Stirling's formula and, as always,
we are assuming that $\eps$ is sufficiently small.

We wish to show that~\eqref{Gpusual} closely matches the probability
in~$\V_p$.
Define $P(p,\svec,\tvec) =  \Prob_{\B_p} (\stevent)$.
By the definition of $\V_p$, we have
\[
\Prob_{\V_p} (\stevent)
  = V(p)^{-1}\!\! \int_0^1 \! K_p(p') P(p',\svec,\tvec)\,dp'.
\]
By Section~\ref{s:binomialmodels} item 2, we have
\begin{equation}\label{Pprat}
   \frac{P(p',\svec,\tvec)}{P(p,\svec,\tvec)}
   = \frac{\Prob_{\I_p}\(\sum_{i=1}^m S_i=\sum_{j=1}^n T_j\)}
              {\Prob_{\I_{p'}}\(\sum_{i=1}^m S_i=\sum_{j=1}^n T_j\)}
      \;\biggl(\frac{p'}{p}\biggr)^{\!2k}
            \biggl(\frac{1-p'}{1-p}\biggr)^{\!2mn-2k}.
\end{equation}
We will divide the integral into three parts.
Define $J_p=[p-n^{-1+3\eps},p+n^{-1+3\eps}]$.
By Lemma~\ref{Bpdenom} and~\eqref{Mcons}, for $p'\in J_p$
and $(\svec,\tvec)\notin B$, we have
\begin{equation}\label{Pratio}
\frac{P(p',\svec,\tvec)}{P(p,\svec,\tvec)}
   = \exp\biggl( \frac{2(k-pmn)}{pq} (p'-p)
                      - \frac{mn}{pq}(p'-p)^2 + O(n^{-1/2})\biggr),
\end{equation}
which gives
\[
   \int_{J_p}  K_p(p') P(p',\svec,\tvec)\,dp'
      = 2^{-1/2} P(p,\svec,\tvec)
             \exp\biggl( \frac{(k-pmn)^2}{2pqmn} + O(n^{-1/2})\biggr).
\]
To bound the integral outside $J_p$, note that 
$(p'/p)^{2k}\((1-p')/(1-p)\)^{2mn-2k}$ is increasing for
$p'\le p-n^{-1+3\eps}$ and decreasing for $p\ge p+n^{-1+3\eps}$.
Also, since the mean square of a set of numbers is at least as
large as the square of their mean, we can infer from \eqref{sumsq}
that $\Prob_{\I_{p'}}\(\sum_{i=1}^m S_i=\sum_{j=1}^n T_j\)\ge (mn+1)^{-1}$
for all~$p'$.
Since $mn\,\oo(1)=\oo(1)$, we obtain from~\eqref{Pprat} that
\[
   \int_{[0,1]\setminus J_p}  K_p(p') P(p',\svec,\tvec)\,dp'
   = \oo(1) P(p,\svec,\tvec).
\]
Recalling Lemma~\ref{Vpest}, we conclude that
\begin{align*}
   V(p)^{-1} \int_0^1  K_p(p') &P(p',\svec,\tvec)\,dp' \\
      &= 2^{-1/2} P(p,\svec,\tvec)
             \exp\biggl( \frac{(k-pmn)^2}{2pqmn} + O(n^{-1/2})\biggr),
\end{align*}
which matches~\eqref{Gpusual} when the value of $P(p,\svec,\tvec)$
given by Lemma~\ref{Bpdenom} is substituted.
This completes the proof of the first claim of
Theorem~\ref{twosides}(a).
The next two claims follow on summing the first claim
over all $(\svec,\tvec)$.
For the variance, we can apply the formula for the expectation to argue
\begin{align*}
   \Var_{\G_p}(X) &= \min_{\mu\in\Reals} \expect_{\G_p}(X-\mu)^2 \\
     &= \min_{\mu\in\Reals}\, \( \oo(1) \max_{(\svec,\tvec)}\, (X-\mu)^2
           + (1+O(n^{-b})) \expect_{\V_p}(X-\mu)^2 \)  \\
     &= \min_{\mu\in\Reals}\, \( \oo(1) \max_{(\svec,\tvec)} X^2
           + (1+O(n^{-b})) \expect_{\V_p}(X-\mu)^2 \) \\
     &= \oo(1) \max_{(\svec,\tvec)} X^2
          + (1+O(n^{-b})) \min_{\mu\in\Reals} \expect_{\V_p}(X-\mu)^2 \\
     &= \oo(1) \max_{(\svec,\tvec)} X^2
          + (1+O(n^{-b})) \Var_{\V_p}(X).
\end{align*}
For the third line we have used the obvious fact that the minimum in
the first line occurs somewhere in the interval $[\min X,\max X]$.

\medskip

The proof of Theorem~\ref{twosides}(b) is the same.
To prove Theorem~\ref{twosides}(c), note that
according to Theorems~\ref{regularity}, \ref{wriggly} and \ref{enumeration},
there is an event $B\subseteq I_{m,n}$ such that
$\Prob_{\G_k}(B) = \oo(1)$ and, for $(\svec,\tvec)\notin B$,
\[
   \Prob_{\G_k}(\stevent) 
        = \exp\( O(n^{-b}) \) \binom{mn}{k}^{\!\!-2}\sprod\tprod,
\]
which matches $\Prob_{\B_k}(\stevent)$
up to the error term.
Similarly for Theorem~\ref{twosides}(d).

Theorem~\ref{oneside} follows from a similar argument, on noting that
the $\eps$-regularity of~$\tvec$ implies 
\[
    \sum_{j=1}^n \,(T_j-\lambda m)^2 \le n^{2+2\eps}
       \le m^{4\eps}\lambda(1-\lambda)mn.
\]

\medskip

Finally, we prove Corollary~\ref{vanishing} for $\D=\G_p$, which
is representative of the four cases.
In view of Theorem~\ref{twosides}, it will suffice to prove that
\begin{equation}\label{toprove}
   \Prob_{\V_p}(E) \le \oo(1) + o(1)\ssqrt{\Prob_{\B_p}(E)}
\end{equation}
if $\Prob_{\B_p}(E)\to 0$.
Define
\[
   y=\max\Bigl\{n^\eps,\ssqrt{-\log(\Prob_{\B_p}(E))
           -\tfrac12\log\log(\Prob_{\B_p}(E))}\,\Bigr\}
\]
and
\[
  \hat E = \bigl\{ (\svec,\tvec) \in E : \abs{K-pmn}\le y\sqrt{pqmn}\, \bigr\}.
\]
By a suitable normal approximation of the binomial distribution,
such as~\cite[Thm.\,3]{littlewood},
$\Prob_{\G_p}(E\setminus\hat E)=O(e^{-y^2/2}/y)$,
so by Theorem~\ref{twosides},
$\Prob_{\V_p}(E\setminus\hat E)=\oo(1)+O(e^{-y^2/2}/y)$.
Also note that
\begin{equation}\label{tails}
    \int_{\abs{p'-p}>y\sqrt{pq/2mn}} K_p(p')\, dp' = O(e^{-y^2/2}/y).
\end{equation}
Therefore, since $V(p)^{-1} = 1+\oo(1)$ by Lemma~\ref{Vpest},
\[ 
    \Prob_{\V_p}(E) = \oo(1) + O(e^{-y^2/2}/y) 
           + \int_{\abs{p'-p}\le y\sqrt{pq/2mn}} 
           K_p(p') \Prob_{\B_{p'}}(\hat E)\, dp.
\]
Now note that, by~\eqref{Pratio}, for $\abs{p'-p}\le y\sqrt{pq/2mn}$
and $\abs{k-pmn}\le y\sqrt{pqmn}$ we have
\begin{align*}
  &\frac{\Prob_{\B_{p'}}(\stevent)}{\Prob_{\B_p}(\stevent)}\\
    &{\kern20mm} \le \exp\biggl( \frac{2(k-pmn)}{pq} (p'-p)
                      - \frac{mn}{pq}(p'-p)^2 + O(n^{-1/2})\biggr) \\
    &{\kern20mm} \le (1+O(n^{-1/2})) e^{y^2/2}
\end{align*}
and so 
\[
  \Prob_{\B_{p'}}(\hat E) \le (1+O(n^{-1/2})) e^{y^2/2} \Prob_{\B_p}(\hat E).
\] 
Since $\int K_p(p')\,dp < 1$, we have proved that
\[
    \Prob_{\V_p}(E) \le \oo(1) + O(e^{-y^2/2}/y)
         + (1+o(1)) e^{y^2/2} \Prob_{\B_p}(E),
\]
which gives \eqref{toprove} when the value of $y$ is substituted. To prove the statement for the case $\D=\B_p$, redefine $y$ and $\hat E$ by replacing each instance of $\B_p$ with $\V_p$. and then proceed in the same fashion (although in this case because of the direction of the inequality it is enough to note that the tails of the integral in \eqref{tails} are positive;
we do not need to show an upper bound as in the above proof for $\D=\G_p$). 


\section{Concluding remarks}

A theorem similar to Theorem~\ref{enumeration} holds also in the
sparse domain.  This was shown by Greenhill, McKay and Wang
in the case that $(\max_i s_i)(\max_j t_j) = o\((\sum_i s_i)^{2/3}\)$~\cite{GMW}.
That theorem can be used to develop a parallel theory of
degree sequences in that domain, though some of the methods
used in this paper must be replaced.
However the lack of a precise enumeration in the gap between
the sparse domain and the dense domain of Theorem~\ref{enumeration}
currently thwarts a theory which spans both the sparse and dense
domains.

\nicebreak

\end{document}